\documentclass[preprint,12pt]{article}
\usepackage[top=1in, bottom=1in, left=0.9in, right=0.9in]{geometry}
\usepackage{amsmath,amsfonts,amssymb,amsthm}
\usepackage{hyperref}
\usepackage{mathrsfs,dsfont}
\usepackage{graphicx,color}
\usepackage{colortbl,dcolumn}
\usepackage{psfrag}
\usepackage{booktabs}
\usepackage{cite}
\usepackage{subfigure}
\usepackage{epstopdf}
\usepackage{caption}
\usepackage{url}

\allowdisplaybreaks[4]
\numberwithin{equation}{section}
\usepackage[justification=centering]{caption}


\renewcommand{\footnote}{}
\numberwithin{equation}{section}

\newcommand{\R}{\mathbb{R}}
\newcommand{\N}{\mathbb{N}}

\newcommand{\E}{\mathbb{E}}
\renewcommand{\P}{\mathbb{P}}

\newcommand{\F}{\mathcal{F}}
\newcommand{\diff}{{\,\rm{d}}}
\newcommand{\V}{\mathcal{V}}
\newcommand{\C}{\mathbb{C}}
\newcommand{\e}{\hat{e}}
\newcommand{\D}{\hat{D}}
\newcommand{\bb}{\hat{b}}
\newcommand{\si}{\hat{\sigma}}
\renewcommand{\diff}{{\,\mathrm{d}}}

\newtheorem{theorem}{Theorem}[section]
\newtheorem{definition}[theorem]{Definition}
\newtheorem{lemma}[theorem]{Lemma}

\newtheorem{assumption}[theorem]{Assumption}

\begin{document}
\title{Convergence rate and exponential stability of backward Euler method for neutral stochastic delay differential equations under generalized monotonicity conditions\footnotemark[1]}

\author{Jingjing Cai\footnotemark[2]\qquad
Ziheng Chen\footnotemark[3]\qquad
Yuanling Niu\footnotemark[4]}

\date{}

\maketitle

\footnotetext{\footnotemark[1] This work was supported by National Natural Science Foundation of China (Nos. 12201552, 12031020, 12371417 and 12071488), Yunnan Fundamental Research Projects (No. 202301AU070010) and Innovation Team of School of Mathematics and Statistics of Yunnan University (No. ST20210104).}





\footnotetext{\footnotemark[2] School of Mathematics and Statistics, HNP-LAMA, Central South University, Changsha, 410083, China. Email: 212111072@csu.edu.cn}

\footnotetext{\footnotemark[3] School of Mathematics and Statistics, Yunnan University, Kunming, Yunnan, 650500, China. Email: czh@ynu.edu.cn}

\footnotetext{\footnotemark[4] School of Mathematics and Statistics, HNP-LAMA, Central South University, Changsha, 410083, China. Email: yuanlingniu@csu.edu.cn. Corresponding author.}

\begin{abstract}
      {\rm
      This work focuses on the numerical approximations of neutral stochastic delay differential equations with their drift and diffusion coefficients growing super-linearly with respect to both delay variables and state variables. Under generalized monotonicity conditions, we prove that the backward Euler method not only converges strongly in the mean square sense with order $1/2$, but also inherit the mean square exponential stability of the original equations.
      As a byproduct, we obtain the same results on convergence rate and exponential stability of the backward Euler method for stochastic delay differential equations with generalized monotonicity conditions. These theoretical results are finally supported by several numerical experiments.
      } \\

      \textbf{AMS subject classification: }
      {\rm 60H10, 60H35, 65C30}\\

      \textbf{Key Words: }{\rm Neutral stochastic delay differential equations, backward Euler method, mean square convergence rate, mean square exponential stability, generalized monotonicity condition}
\end{abstract}

\section{Introduction}

This work concerns the following non-autonomous neutral stochastic delay differential equations (NSDDEs) of It\^{o} form
\begin{equation}\label{eq:NSDDEsdiffform}
      \diff{(X(t)-D(X(t-\tau)))}
	  =
	  b(t,X(t),X(t-\tau))\diff{t}
	  +
	  \sigma(t,X(t),X(t-\tau))\diff{W(t)},
	  \quad t > 0
\end{equation}
with the initial value $X(t) = \phi(t)$ for $t \in [-\tau,0]$, where the delay $\tau > 0$ is a constant. Here, $\{W(t)\}_{t \geq 0}$ is an $m$-dimensional standard Brownian motion on the complete filtered probability space $(\Omega,\F,\P,\{\F_{t}\}_{t \geq 0})$ with the filtration $\{\F_{t}\}_{t \geq 0}$ satisfying the usual conditions. Moreover, $\phi \colon [-\tau,0] \to \R^{d}$, $D \colon \R^{d} \to \R^{d}$, $b \colon [0,+\infty) \times \R^{d} \times \R^{d} \to \R^{d}$ and $\sigma \colon [0,+\infty) \times \R^{d} \times \R^{d} \to \R^{d \times m}$ are Borel measurable functions; see Assumption \ref{ass:mainassone} below for precise conditions. As an important type of stochastic differential equations (SDEs), NSDDEs \eqref{eq:NSDDEsdiffform} not only depend on the past and present states but also involve derivatives with delays and the functions themselves, consequently becoming complex but performing better to capture the complexity and uncertainty of real-world systems; see, e.g., \cite[Chapter 6]{mao2008stochastic}. Nowadays, these equations have been widely used to model different evolution phenomena arising in various fields, such as signal processing in neural networks \cite{hu2015advances}, collision problem in electrodynamics \cite{bao2016asymptotic} and lossless transmission in circuit systems \cite{liao2016dynamical}.

Since the analytic solutions of most NSDDEs are rarely available, numerical approximation has become a powerful tool to study the behaviour of their solutions. Up to now, much progress has been
achieved in constructing and investigating different kinds of numerical methods for NSDDEs with the traditional global Lipschitz condition (see, e.g., \cite{zhang2008meansquare, gan2011meansquare, wang2011meansquare}) and for NSDDEs with the popular global monotonicity condition (see, e.g., \cite{zhou2009convergence, zong2015exponential, ji2017tamed, yan2017strong, yuan2017meansquare, obradovic2019almost, lan2019strong, zhao2020strong, zhang2020convergence, cui2023explicit, gao2024convergence} and references therein). However, both global Lipschitz condition and the global monotonicity condition are still restrictive in the sense that some practical NSDDEs, such as neutral stochastic cellular neural networks \cite{guo2013fixed} and neutral stochastic delay power logistic model \cite{mao2005khasminskii}, violating such conditions. This situation motivates us to establish efficient numerical methods for solving NSDDEs \eqref{eq:NSDDEsdiffform} with generalized monotonicity condition.

Compared with a large amount of valuable results (see, e.g., \cite{hunzenthaler2015numerical, hutzenthaler2020perturbation, wang2023meansquare, wu2022split, chen2021first, yi2021positivity, lei2023strong, dai2023order} and references therein) for the usual SDEs beyond global monotonicity condition, just a very limited number of literature \cite{deng2021tamed, mao2011numerical, guo2018truncated, fei2020advances, song2022strong} focus on the numerical approximation of NSDDEs with generalized monotonicity condition. The authors in \cite{deng2021tamed} propose two types of explicit tamed Euler--Maruyama (EM) schemes for NSDDEs and derive the $L^{p}(\Omega;\R^{d})$-norm convergence for the first type scheme, the mean square convergence rate for the second type scheme and the mean square exponential stability for the partially tamed EM scheme. It is worth emphasizing that the remainder works \cite{mao2011numerical, guo2018truncated, fei2020advances, song2022strong} actually deal with numerical approximations of stochastic delay differential equations (SDDEs), i.e., NSDDEs \eqref{eq:NSDDEsdiffform} with the neutral term $D \equiv 0$, under the generalized Khasminskii-type condition. The seminal work \cite{mao2011numerical} proves that the explicit EM numerical solutions converge to the true solutions in probability. Subsequently, the strong convergence (without rate) \cite{guo2018truncated}, the almost optimal strong convergence rate \cite{fei2020advances} and the optimal strong convergence rate \cite{song2022strong} of the explicit truncated EM method have been discussed in sequence. Although the explicit numerical methods possess the advantage of simple algebraic structures and low computational costs, they usually face a severe stepsize restriction in solving stiff SDEs, which is due to stability issues and makes the overall computational costs extremely expensive; see, e.g., \cite{chen2020convergence, wang2020meansquare, milstein2021stochastic, wang2023meansquare} and Section \ref{section5}. Owing to these facts, our aim here is to establish the strong convergence rate and exponential stability in the mean square sense of an implicit numerical method, i.e., the backward Euler method \eqref{eq:BEmethodintro}, for \eqref{eq:NSDDEsdiffform} under generalized monotonicity condition.

As a class of standard implicit numerical method, the backward Euler method and its variant methods have already been applied to SDEs \cite{higham2002strong, mao2013strong, hunzenthaler2015numerical, beyn2016stochastic, andersson2017meansquare, wang2020meansquare, liu2022lpconvergence}, SDDEs \cite{mao2003numerical, zong2015theta, zhou2017strong, zhang2019backward, yue2021strong} and NSDDEs \cite{yin2011convergence, yan2017strong, obradovic2019implicit} with non-globally Lipschitz conditions. All these existing results indicate that the backward Euler method is able to provide strongly convergent numerical approximations for the considered equations satisfying polynomial growth condition and global monotonicity condition. This theory will be further strengthened by one of our main results Theorem \ref{th:mul-noise-rate}, showing that numerical solutions generated by the backward Euler method \eqref{eq:BEmethodintro} converges strongly with order $1/2$ in the mean square sense to exact solutions of NSDDEs \eqref{eq:NSDDEsdiffform} under polynomial growth condition \eqref{eq:bpolynomialgrow} and generalized monotonicity condition \eqref{eq:generalmonocond}. Concerning another main result of this work, we also prove that the backward Euler method \eqref{eq:BEmethodintro} is able to inherit the mean square exponential stability of NSDDEs \eqref{eq:NSDDEsdiffform} under another
generalized monotonicity condition \eqref{eq:genmonostability}, which is weaker than Assumptions 5.1 and 5.2 in \cite{deng2021tamed}. As a direct consequence of our main results, we additionally derive the convergence rate and exponentially stability in the mean square sense of the backward Euler method for SDDEs with global monotonicity condition (i.e., $V \equiv 0$ in \eqref{eq:generalmonocond} and \eqref{eq:genmonostability}) or generalized monotonicity condition (i.e., $V \neq 0$ in \eqref{eq:generalmonocond} and \eqref{eq:genmonostability}). Both cases allow the drift and diffusion coefficients of SDDEs to super-linearly grow not only with respect to the delay variables but also to the state variables, which has not been reported before (see, e.g., \cite{mao2003numerical, zong2015theta, zhou2017strong, cao2021strong, zhang2019backward, yue2021strong} for more details).

The remainder of this paper is organized as follows. The next section provides the upper mean square error bound for the backward Euler method, which will be exploited to establish the corresponding mean square convergence rate in Section \ref{section3}. Section \ref{section4} discusses the mean square exponential stability for NSDDEs and the considered method. Finally, some numerical experiments are conducted to verify the theoretical results in Section \ref{section5}.

\section{Preliminaries and upper mean square error bound}\label{section2}

We start with some notation that will be used throughout the paper. Let $\R^{+} := [0,+\infty), \R := (-\infty,+\infty)$ and $\N := \{1,2,\cdots\}$ be the set of all positive integers. For any $a,b \in \R$, we set $\lceil a\rceil$ as the smallest integer that is greater than or equal to $a$, $a \vee b := \max\{a,b\}$ and $a \wedge b := \min\{a,b\}$. Let $\langle \cdot, \cdot \rangle$ and $|\cdot|$ and denote the Euclidean inner product and the corresponding norm of vectors in $\R^{d}$. By $A^{\top}$, we denote the transpose of a vector $A \in \R^{d}$ or a matrix $A \in \R^{d \times m}$. For any matrix $A$, we use $|A| := \sqrt{\mathrm{trace}(A^{\top}A)}$ to denote its trace norm. For any $p \geq 1$, let $(L^{p}(\Omega;\R^{d}), |\cdot|_{L^{p}(\Omega;\R^{d})})$ be the Banach space of all $\R^{d}$-valued random variables $\eta$ on the probability space $(\Omega,\F,\P)$ with norm $|\eta|_{L^{p}(\Omega;\R^{d})} := (\E[|\eta|^{p}])^{\frac{1}{p}} < \infty$. Fix $\tau > 0$, we denote by $\mathbb{C}([-\tau,0];\R^{d})$ the space of all continuous functions $\phi \colon [-\tau,0] \to \R^{d}$ equipped with the norm $|\phi|_{\mathbb{C}([-\tau,0];\R^{d})} := \sup_{t \in [-\tau,0]}|\phi(t)|$. By $\mathbb{C}( \mathbb{R}^d;\mathbb{R}^{+} )$, we denote the space of all nonnegative continuous functions defined on $\R^d$. By $\mathcal{V}(\R^{d} \times \R^{d};\R^{+})$, we denote the space of all functions $V \colon \R^{d} \times \R^{d}$ satisfying $V(x,x) = 0$ for all $x \in \R^{d}$. For simplicity, the letter $C$ will be used to stand for a generic positive constant whose value may vary for each appearance, but
independent of the stepsize of the considered numerical method.

In order to ensure the well-posedness of NSDDEs \eqref{eq:NSDDEsdiffform}, we give the following conditions.

\begin{assumption}\label{ass:mainassone}
      Let $D(0) = 0$ and assume that there exists $\nu \in (0,1)$ such that
      \begin{align}\label{eq:DCompression}
			|D(x) - D(y)|
            \leq \nu |x - y|,
            \quad x,y \in \R^d.
      \end{align}
      Let $| b(t,0,0) | + | \sigma(t,0,0) | \leq K$ for all $t \geq 0$ and some constant $K > 0$. Also let $b$ and $\sigma$ satisfy the polynomial growth condition, i.e., there exist $q > 1$ and $K > 0$ such that for all $t \geq 0$ and $x_{1}, x_{2}, y_{1}, y_{2} \in \R^{d}$,
      \begin{align}\label{eq:bpolynomialgrow}
            &~|b(t,x_{1},y_{1}) - b(t,x_{2},y_{2})|
            +
            |\sigma(t,x_{1},y_{1}) - \sigma(t,x_{2},y_{2})|
            \notag
            \\\leq&~
            K(1 + |x_{1}|^{q-1} + |x_{2}|^{q-1}
            + |y_{1}|^{q-1} + |y_{2}|^{q-1})
            (|x_{1}-x_{2}| + |y_{1}-y_{2}|).
      \end{align}
      Further, let $b$ and $\sigma$ satisfy the Khasminskii-type condition, i.e., there exist $p^{*} \geq 4q-2$, $K_{1},K_{2}\geq 0$ and function $V_1\in\C(\R^{d};\R^{+})$ such that
      \begin{align}\label{eq:globalcoercivitycond}
            &~(1+|x|^{2})^{\frac{p^{*}}{2}-1}
            \big(2\langle x-D(y),b(t,x,y) \rangle
			+
			(p^*-1)|\sigma(t,x,y)|^{2}\big)\notag
			\\\leq&~
			K_{1}(1+|x|^{p^{*}}+|y|^{p^{*}})-K_{2}(V_1(x)-V_1(y)),
			\quad t \geq 0, x,y \in \R^{d}.
      \end{align}
\end{assumption}

Owing to $|b(t,0,0)| + |\sigma(t,0,0)| \leq K$ for all $t \geq 0$ and \eqref{eq:bpolynomialgrow}, one can present the super-linearly growing bound for $b$ and $\sigma$, i.e., there exists $K > 0$ such that
\begin{align}\label{eq:bsuperlinear}
      |b(t,x,y)|+|\sigma(t,x,y)|
      \leq
      K(1 +|x|^{q} + |y|^{q}),
      \quad t \geq 0, x,y \in \R^{d},
\end{align}
which indicates that the coefficients $b$ and $\sigma$ grow super-linearly not only with respect to the delay variables but also to the state variables. Furthermore, \eqref{eq:bpolynomialgrow} shows that $b$ and $\sigma$ are locally Lipschitz continuous, i.e., for every $R > 0$, there exists $K(R) > 0$, depending on $R$, such that for all $t \geq 0$ and $x_{1},x_{2},y_{1},y_{2} \in \R^{d}$ with $|x_{1}| \vee |x_{2}| \vee |y_{1}| \vee |y_{2}| \leq R$,
\begin{align*}
      |b(t,x_{1},y_{1})-b(t,x_{2},y_{2})|
      \vee
      |\sigma(t,x_{1},y_{1})-\sigma(t,x_{2},y_{2})|
      \leq
      K(R)(|x_{1}-x_{2}| + |y_{1}-y_{2}|).
\end{align*}
Together with \eqref{eq:DCompression} and \eqref{eq:globalcoercivitycond}, Theorem 3.1 in \cite{luo2006newcriteria} guarantees that \eqref{eq:NSDDEsdiffform} admits a unique global solution $\{X(t)\}_{t \geq -\tau}$, described by $X(t) = \phi(t)$ for all $t \in [-\tau,0]$ and
\begin{align}\label{eq:NSDDEsintegral}
      X(t)-D(X(t-\tau))
      =&~
      \phi(0)-D(\phi(-\tau))
      +
      \int_{0}^{t} b(s,X(s),X(s-\tau)) \diff{s}
      \notag
      \\&~+
      \int_{0}^{t} \sigma(s,X(s),X(s-\tau))\diff{W(s)},
      \quad t \geq 0.
\end{align}
Following the proof of \cite[Theorem 4.5 in Chapter 6]{mao2008stochastic}, one can further bound the moment of $\{X(t)\}_{t \in [-\tau,T]}$ for any given $T > 0$, i.e., there exists $K > 0$, depending on $\phi, p^{*}$ and $T$, such that
\begin{align}\label{eq:Xmomentbound}
      \sup_{-\tau \leq t \leq T} \E\big[|X(t)|^{p^{*}}\big]
      \leq
      K.
\end{align}

Armed with these preparations, we are in a position to consider the numerical approximations of \eqref{eq:NSDDEsdiffform}. To this end, let $\Delta > 0$ be an equidistant stepsize with $\tau = M\Delta$ for some $M \in \N$ and let $t_{n} := n\Delta, n = -M,-M+1,\cdots$ be the corresponding uniform grids. The backward Euler method applied to \eqref{eq:NSDDEsdiffform} is given by
$X_{n} = \phi(t_{n})$ for $n = -M, -M+1, \cdots, 0$ and
\begin{align}\label{eq:BEmethodintro}
      X_{n+1}
      =&~
      D(X_{n+1-M}) - D(X_{n-M}) + X_{n}
      +
      b(t_{n+1},X_{n+1},X_{n+1-M})\Delta
      \notag\\&~+
      \sigma(t_{n},X_{n},X_{n-M})\Delta{W_{n}},
      \quad n = 0, 1,\cdots,
\end{align}
where $X_{n}$ denotes the numerical approximation of $X(t_{n})$ and $\Delta{W_{n}}:=W(t_{n+1})-W(t_{n})$ is the Brownian increment. Concerning the well-posedness and convergence analysis of \eqref{eq:BEmethodintro}, we put the following generalized monotone condition on the coefficients of \eqref{eq:NSDDEsdiffform}.

\begin{assumption}\label{ass:mainasstwo}
      Assume that there exist constants $\eta > 1$, $L > 0$ and function  $V \in \V(\R^d \times \R^d;\R^{+})$ such that
      \begin{align}\label{eq:generalmonocond}
            &~2\langle(x_{1}-x_{2})-(D(y_{1})-D(y_{2})),
			b(t,x_{1},y_{1})-b(t,x_{2},y_{2}) \rangle
			+
            \eta|\sigma(t,x_{1},y_{1})
			-\sigma(t,x_{2},y_{2})|^{2}
            \notag
			\\\leq&~
            L(|x_{1}-x_{2}|^{2}
            + |y_{1}-y_{2}|^{2})
            - V(x_1,x_2)+V(y_1,y_2),
            \quad
            t \geq 0, x_{1},x_{2},y_{1},y_{2} \in \R^{d}.
		\end{align}
\end{assumption}

Under Assumption \ref{ass:mainasstwo}, we establish the well-posedness of \eqref{eq:BEmethodintro}.

\begin{lemma}\label{well-posedness-num}
      Suppose that Assumption \ref{ass:mainasstwo} holds with $L\Delta <2$. Then \eqref{eq:BEmethodintro} admits a unique solution $\{X_{n}\}_{n \geq -M}$ with probability one.
\end{lemma}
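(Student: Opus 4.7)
The plan is to argue by induction on $n \geq 0$: assuming $X_{-M}, X_{-M+1}, \dots, X_{n}$ have already been constructed as $\mathcal{F}_{t_{k}}$-measurable random variables, I would solve \eqref{eq:BEmethodintro} for $X_{n+1}$. Collecting the known quantities on one side, the scheme is equivalent to
\begin{equation*}
X_{n+1} - \Delta\, b(t_{n+1}, X_{n+1}, X_{n+1-M}) = A_{n},
\qquad
A_{n} := X_{n} + D(X_{n+1-M}) - D(X_{n-M}) + \sigma(t_{n},X_{n},X_{n-M})\Delta W_{n},
\end{equation*}
where $A_{n}$ is $\mathcal{F}_{t_{n+1}}$-measurable. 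Since $X_{n+1-M}$ is already built, I would treat the delay slot as a fixed (random) parameter $y=X_{n+1-M}(\omega)$ and perform the construction pathwise, i.e.\ for each $\omega \in \Omega$ show that the map $G_{\omega}(x) := x - \Delta\, b(t_{n+1},x,y(\omega))$ is a bijection on $\R^{d}$ and set $X_{n+1}(\omega) := G_{\omega}^{-1}(A_{n}(\omega))$.

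The key step is to exploit Assumption~\ref{ass:mainasstwo} with equal delay arguments $y_{1}=y_{2}=y(\omega)$. In that case $D(y_{1})-D(y_{2})=0$ and, crucially, $V(y_{1},y_{2})=V(y,y)=0$ by the very definition of the class $\mathcal{V}(\R^{d}\times\R^{d};\R^{+})$. Dropping the two nonnegative terms $\eta|\sigma(t_{n+1},x_{1},y)-\sigma(t_{n+1},x_{2},y)|^{2}$ and $V(x_{1},x_{2})$ from \eqref{eq:generalmonocond} then leaves the one-sided Lipschitz estimate
\begin{equation*}
2\langle x_{1}-x_{2},\, b(t_{n+1},x_{1},y(\omega)) - b(t_{n+1},x_{2},y(\omega)) \rangle \leq L\,|x_{1}-x_{2}|^{2}.
\end{equation*}
Consequently $\langle x_{1}-x_{2},\, G_{\omega}(x_{1})-G_{\omega}(x_{2})\rangle \geq (1 - L\Delta/2)|x_{1}-x_{2}|^{2}$, so under the hypothesis $L\Delta<2$ the map $G_{\omega}$ is strongly monotone with constant $\mu := 1 - L\Delta/2 > 0$. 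Continuity of $G_{\omega}$ is automatic from the local Lipschitz continuity of $b$ established from \eqref{eq:bpolynomialgrow}.

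At this point I would invoke the standard finite-dimensional Minty--Browder theorem: a continuous, strongly monotone map from $\R^{d}$ to $\R^{d}$ is a homeomorphism. This delivers the unique pathwise solution $X_{n+1}(\omega)=G_{\omega}^{-1}(A_{n}(\omega))$. The one point that deserves some care, and which I expect to be the main (though mild) obstacle, is joint measurability: $G_{\omega}$ depends on $\omega$ through both the parameter $y(\omega)$ and through $A_{n}(\omega)$. This is handled by observing that strong monotonicity gives the Lipschitz estimate $|G_{\omega}^{-1}(a_{1})-G_{\omega'}^{-1}(a_{2})| \leq \mu^{-1}(|a_{1}-a_{2}|+\Delta\,|b(t_{n+1},G_{\omega'}^{-1}(a_{2}),y(\omega))-b(t_{n+1},G_{\omega'}^{-1}(a_{2}),y(\omega'))|)$, which combined with the continuity of $b$ in the delay argument shows that the solution depends continuously on the pair $(y,A)$; hence the induced random variable $X_{n+1}$ is $\mathcal{F}_{t_{n+1}}$-measurable, closing the induction.
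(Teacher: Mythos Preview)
Your approach is essentially the same as the paper's: reduce the implicit step to solving $x-\Delta\, b(t_{n+1},x,y)=\text{const}$, exploit \eqref{eq:generalmonocond} with $y_{1}=y_{2}$ (so $V(y,y)=0$ and the $D$-terms cancel) to obtain strong monotonicity with constant $1-L\Delta/2>0$, and then invoke the uniform monotonicity/Minty--Browder theorem. The paper omits the measurability discussion you add, but otherwise the arguments coincide.
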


\begin{proof}
      For each step of the backward Euler method \eqref{eq:BEmethodintro} with $X_{n-1}$ given, to find $X_{n}$ via \eqref{eq:BEmethodintro} is equivalent to solve the implicit problem $H(x) := x - b(t_{n}, x, a) \Delta + c = 0$, where $a, c \in \R^d$ are known numbers. Utilizing \eqref{eq:generalmonocond} and $V \in \V(\R^d \times \R^d;\R^{+})$ shows that for any $x_{1}, x_{2} \in \R^{d}$,
      \begin{align*}
            \langle x_1-x_2 , H(x_1)-H(x_2)\rangle
            =&~
            |x_1 - x_2|^2
            -
            \Delta \langle x_1-x_2,b(t_{n}, x_1,a)-b(t_{n}, x_2,a)\rangle
            \\\geq&~
            |x_1 - x_2|^2
            -
            \frac{\Delta}{2}(L|x_1 - x_2|^{2}-V(x_1,x_2))
            \\\geq&~
            \Big(1-\frac{L}{2}\Delta\Big)|x_1 - x_2|^{2}.
      \end{align*}
      Then we complete the proof via the uniform monotonicity theorem \cite[Theorem C.2]{stuart1996dynamical}.
\end{proof}

In order to perform the convergence analysis of \eqref{eq:BEmethodintro} on any finite time interval, we fix $T = N\Delta > 0$ for some $N \in \N$ and define the local truncated error. More precisely, we first replace $X_{n}$ in \eqref{eq:BEmethodintro} by $X(t_{n})$ for $n = -M, -M+1, \cdots,N$ to get
\begin{align*}
      \tilde{X}_{n+1}
      :=&~
      D(X(t_{n+1-M})) - D(X(t_{n-M})) + X(t_{n})
      \\&~+
      b(t_{n+1},X(t_{n+1}),X(t_{n+1-M}))\Delta
      +
      \sigma(t_{n},X(t_{n}),X(t_{n-M}))\Delta{W_{n}},
\end{align*}
and then define $R_{n+1} := X(t_{n+1})-\tilde{X}_{n+1}$ as the local truncated error, i.e.,
\begin{align}\label{eq:localerror}
      R_{n+1}
      =&~
      X(t_{n+1})-X(t_{n})
      -
      (D(X(t_{n+1-M})) - D(X(t_{n-M})))
      \notag
      \\&~-
      b(t_{n+1},X(t_{n+1}),X(t_{n+1-M})) \Delta
      -
      \sigma(t_{n},X(t_{n}),X(t_{n-M}))\Delta W_{n}.
\end{align}
As a consequence of \eqref{eq:BEmethodintro} and \eqref{eq:localerror}, we have
\begin{align}\label{eq:error}
      &~\big(X(t_{n}) - X_{n}\big) - \big(D(X(t_{n-M})) - D(X_{n-M})\big)
      \notag
      \\=&~
      \big(X(t_{n-1}) - X_{n-1}\big)
      - \big(D(X(t_{n-1-M}) - D(X_{n-1-M})\big)
      \notag
      \\&~+
      \big(b(t_{n},X(t_{n}),X(t_{n-M}))
      - b(t_{n},X_{n},X_{n-M})\big)\Delta
      \notag
      \\&~+
      \big(\sigma(t_{n-1},X(t_{n-1}),X(t_{n-1-M}))
      -\sigma(t_{n-1},X_{n-1},X_{n-1-M})\big)\Delta{W_{n-1}} + R_{n}.
\end{align}
For any $n = 1,2,\cdots, N$, setting $e_{n}:= X(t_{n}) - X_{n}$, $\Delta D_{n} := D(X(t_{n-M})) - D(X_{n-M})$ and
\begin{align*}
      &~
      \Delta b_{n}
      :=
      b(t_{n},X(t_{n}),X(t_{n-M}))-b(t_{n},X_{n},X_{n-M}),
      \\&~
      \Delta \sigma_{n}
      :=
      \sigma(t_{n},X(t_{n}),X(t_{n-M}))-\sigma(t_{n},X_{n},X_{n-M})
\end{align*}
enables us to rewrite \eqref{eq:error} as
\begin{equation}\label{eq:simplerror}
      e_{n} - \Delta D_{n}
      =
      e_{n-1} - \Delta D_{n-1} + \Delta b_{n} \Delta
      +
      \Delta \sigma_{n-1}\Delta{W_{n-1}} + R_{n}.
\end{equation}

The following lemma provides a upper mean square error bounds via the previous local truncated error, which is a key tool to establish the mean square convergence rate of \eqref{eq:BEmethodintro}.

\begin{lemma}\label{upper error bounds}
      Suppose that Assumptions \ref{ass:mainassone} and \ref{ass:mainasstwo} hold with $\nu + L\Delta \leq \rho$ for some  $\rho \in (\nu,1)$. Then there exists $C > 0$, independent of $\Delta$, such that for all $n = 1,2,\cdots,N$,
      \begin{equation}\label{eq:upper error bounds}
            \E\big[|X(t_{n}) - X_{n}|^{2}\big]
			\leq
            C\bigg(\sum_{i=1}^{N} \E\big[|R_i|^2\big]
            +
            \Delta^{-1}\sum_{i=1}^{N}
            \E\big[|\E({R_i}~|~\F_{t_{i-1}})|^{2}\big]\bigg).
      \end{equation}
\end{lemma}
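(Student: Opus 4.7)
The plan is to set $\xi_n:=e_n-\Delta D_n$ and work from the rearranged error recursion
\begin{equation*}
\xi_n-\Delta b_n\Delta=\xi_{n-1}+\Delta\sigma_{n-1}\Delta W_{n-1}+R_n.
\end{equation*}
Squaring both sides and discarding the non-positive $-|\Delta b_n|^2\Delta^2$ term puts the drift inner product in exactly the shape that \eqref{eq:generalmonocond} controls, yielding $2\Delta\langle\xi_n,\Delta b_n\rangle\leq L\Delta(|e_n|^2+|e_{n-M}|^2)-\eta\Delta|\Delta\sigma_n|^2+\Delta(V(X(t_{n-M}),X_{n-M})-V(X(t_n),X_n))$. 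After taking expectations, the usual It\^{o} bookkeeping handles the explicit diffusion: $\E[\langle\xi_{n-1},\Delta\sigma_{n-1}\Delta W_{n-1}\rangle]=0$ by $\F_{t_{n-1}}$-measurability of $\xi_{n-1}$ and $\Delta\sigma_{n-1}$, and $\E[|\Delta\sigma_{n-1}\Delta W_{n-1}|^2]=\Delta\,\E[|\Delta\sigma_{n-1}|^2]$.

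The two cross terms involving $R_n$ are exactly what produce the two $R$-quantities on the right-hand side of \eqref{eq:upper error bounds}. For $2\E[\langle\xi_{n-1},R_n\rangle]$ I would condition on $\F_{t_{n-1}}$ to replace $R_n$ by $\E(R_n~|~\F_{t_{n-1}})$, then apply Young's inequality with weight $\varepsilon_1\Delta$, producing the $(\varepsilon_1\Delta)^{-1}\E[|\E(R_n~|~\F_{t_{n-1}})|^2]$ contribution plus a harmless $\varepsilon_1\Delta\,\E[|\xi_{n-1}|^2]$. For $2\E[\langle\Delta\sigma_{n-1}\Delta W_{n-1},R_n\rangle]$ I would exploit the zero-conditional-mean property to replace $R_n$ by the martingale difference $R_n-\E(R_n~|~\F_{t_{n-1}})$, and then Cauchy--Schwarz combined with a Young's inequality whose weight is chosen so that the coefficient of $\Delta\,\E[|\Delta\sigma_{n-1}|^2]$ equals exactly $\eta-1$ produces the $\E[|R_n|^2]$ contribution. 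The slack $\eta>1$ in \eqref{eq:generalmonocond} is essential for this balancing.

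Summing the resulting one-step inequality from $n=1$ to $n=k$ triggers two telescopings. First, the diffusion contribution $\sum_{n=1}^k\eta\Delta\bigl(\E[|\Delta\sigma_{n-1}|^2]-\E[|\Delta\sigma_n|^2]\bigr)$ collapses to $\eta\Delta(\E[|\Delta\sigma_0|^2]-\E[|\Delta\sigma_k|^2])\leq 0$ because the matched initial data force $\Delta\sigma_0=0$. Second, the $V$-difference sum telescopes under the index shift of $M$; since $V\ge 0$, $V(x,x)=0$, and $X_n=X(t_n)$ for $n\le 0$, the boundary contribution vanishes and the terminal piece has the right sign. Combining these with $\E[|\xi_0|^2]=0$, I arrive at a clean bound of the form
\begin{align*}
\E[|\xi_k|^2]\leq\varepsilon_1\Delta\sum_{n=0}^{k-1}\E[|\xi_n|^2]+L\Delta\sum_{n=1}^k\bigl(\E[|e_n|^2]+\E[|e_{n-M}|^2]\bigr)+C\sum_{n=1}^k\E[|R_n|^2]+C\Delta^{-1}\sum_{n=1}^k\E[|\E(R_n~|~\F_{t_{n-1}})|^2].
\end{align*}

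To close the estimate, I would invoke the contraction \eqref{eq:DCompression} with Young's inequality (weight $\theta=\nu/(1-\nu)$) to get $\E[|e_n|^2]\leq(1-\nu)^{-1}\E[|\xi_n|^2]+\nu\E[|e_{n-M}|^2]$; summing this over $n$ with $e_n=0$ for $n\le 0$ delivers $\sum_{n=1}^k\E[|e_n|^2]\leq(1-\nu)^{-2}\sum_{n=1}^k\E[|\xi_n|^2]$. Feeding this back and invoking a discrete Gronwall inequality---with the standing smallness of $L\Delta$ supplied by $\nu+L\Delta\leq\rho<1$ ensuring the closing step can be carried out with constants that are uniform in $\Delta$---produces the desired bound on $\E[|\xi_k|^2]$, and a final application of the $e$--$\xi$ relation yields \eqref{eq:upper error bounds}. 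I expect the main technical hurdle to be the time mismatch between the diffusion $\Delta\sigma_{n-1}$ appearing in the scheme and the $\eta|\Delta\sigma_n|^2$ term furnished by the monotonicity: it forces the careful coordination of Young parameters in the two $R$-cross terms, and the telescoping closes only thanks to $\Delta\sigma_0=0$ from the matched initial data.
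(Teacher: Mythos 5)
Your proposal follows essentially the same route as the paper's proof: square the one-step recursion for $\xi_n=e_n-\Delta D_n$, apply \eqref{eq:generalmonocond} to the drift inner product, kill the martingale cross term, split the two $R_n$ cross terms exactly as you describe (conditioning plus Young with weight $\Delta$ for $\langle\xi_{n-1},R_n\rangle$; Young with weight $\eta-1$ so the total coefficient of $\Delta\,\E[|\Delta\sigma_{n-1}|^2]$ is exactly $\eta$ for the other), telescope the $\sigma$- and $V$-sums, and close with the neutral contraction and discrete Gronwall. The differences are cosmetic: the paper uses the identity $|u|^2-|v|^2+|u-v|^2=2\langle u,u-v\rangle$ and absorbs $|\xi_n-\xi_{n-1}|^2$, whereas you expand the square directly; and the paper runs Gronwall on $\E[|e_n|^2]$ after iterating the $e$--$\xi$ relation down to the initial segment (its \eqref{eq:e-interation}), whereas you run it on $\E[|\xi_n|^2]$.

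One step is stated too loosely to close as written. Feeding $\sum_{n=1}^{k}\E[|e_n|^2]\le(1-\nu)^{-2}\sum_{n=1}^{k}\E[|\xi_n|^2]$ back wholesale puts the coefficient $2L\Delta(1-\nu)^{-2}$ in front of $\E[|\xi_k|^2]$ on the right-hand side, and the hypothesis $\nu+L\Delta\le\rho<1$ only gives $L\Delta\le\rho-\nu$, which does not force $2L\Delta(1-\nu)^{-2}<1$ (take $\nu=1/2$, $\rho=0.9$, $L\Delta=0.4$, giving $3.2$). You must peel off the top index before summing: apply the one-step relation $\E[|e_k|^2]\le(1-\nu)^{-1}\E[|\xi_k|^2]+\nu\,\E[|e_{k-M}|^2]$ to the $n=k$ summand alone, so that the coefficient of $\E[|\xi_k|^2]$ is $L\Delta/(1-\nu)\le(\rho-\nu)/(1-\nu)<1$ and all remaining terms sit at indices at most $k-1$, where your summed bound is harmless. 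This is precisely the bookkeeping the paper performs in \eqref{eq:interation}--\eqref{eq:e-interation} before invoking Gronwall; with that correction your argument is complete.
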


\begin{proof}
      Based on the identity
      \begin{align}\label{eq:identity}
            |u|^{2}-|v|^{2} + |u-v|^{2}
            =
            2\langle u,u-v \rangle,
            \quad u,v \in \R^{d},
      \end{align}
      we use \eqref{eq:simplerror} to obtain that for all $n = 1,2,\cdots,N$,
      \begin{align}\label{eq:identity-en-1}
            &~|e_{n} - \Delta D_{n}|^{2}
            -
            |e_{n-1} - \Delta D_{n-1}|^{2}
            +
            |(e_{n}-\Delta D_{n}) - (e_{n-1}-\Delta D_{n-1})|^{2}
            \notag
            \\=&~
            2\langle e_n-\Delta D_{n},
            (e_{n}-\Delta D_{n})-(e_{n-1}-\Delta D_{n-1})\rangle
            \notag
            \\=&~
            2\langle e_n-\Delta D_{n},
            \Delta b_{n} \Delta +
            \Delta \sigma_{n-1}\Delta{W_{n-1}} + R_{n}\rangle
            \notag
            \\=&~
            2\langle e_n-\Delta D_{n},
            \Delta b_{n} \Delta \rangle
            +
            2\langle (e_n-\Delta D_{n})-(e_{n-1}-\Delta D_{n-1}),
            \Delta \sigma_{n-1}\Delta{W_{n-1}} + R_{n}\rangle
            \notag
            \\&~+
            2\langle e_{n-1}-\Delta D_{n-1},
            \Delta \sigma_{n-1}\Delta{W_{n-1}}\rangle
            +
            2\langle e_{n-1}-\Delta D_{n-1},
            R_{n}\rangle.
      \end{align}
      By the properties of conditional expectations and the independence between $\Delta W_{n-1}$ and $\F_{t_{n-1}}$, one has
      \begin{align*}
            &~\E\big[\langle e_{n-1}-\Delta D_{n-1},
            \Delta\sigma_{n-1}\Delta W_{n-1} \rangle \big]
            \\=&~
            \E\big[\E\big(\langle e_{n-1}-\Delta D_{n-1},
            \Delta\sigma_{n-1}\Delta W_{n-1} \rangle ~|~ \F_{t_{n-1}} \big) \big]
            \\=&~					
            \E\big[\langle e_{n-1}-\Delta D_{n-1},
            \Delta \sigma_{n-1}
            \E(\Delta W_{n-1} ~|~ \F_{t_{n-1}})
            \rangle\big]
			\\=&~
			0.
      \end{align*}
      Taking expectations on the both sides of \eqref{eq:identity-en-1} leads to
      \begin{align}\label{eq:identity-E-en}
            &~\E\big[|e_{n} - \Delta D_{n}|^{2}\big]
            -
            \E\big[|e_{n-1} - \Delta D_{n-1}|^{2}\big]
            +
            \E\big[ |(e_{n}-\Delta D_{n})
            - (e_{n-1}-\Delta D_{n-1})|^{2}\big]
            \notag
            \\=&~
            2\Delta \E\big[\langle
            e_n-\Delta D_{n},\Delta b_{n} \rangle \big]
            +
            2\E\big[\langle e_{n-1}-\Delta D_{n-1},R_{n}\rangle \big]
            \notag
            \\&~+
            2\E\big[\langle (e_{n}-\Delta D_{n})-(e_{n-1}-\Delta D_{n-1}),
            \Delta \sigma_{n-1}\Delta W_{n-1} + R_{n}\rangle \big]
            \notag
            \\=:&~
            I_1 + I_2 + I_3.
      \end{align}
      We will estimate $I_1,I_2$ and $I_3$ one by one. Applying \eqref{eq:generalmonocond} yields
      \begin{align}\label{eq:I_1}
            I_1
            \leq&~
            L\Delta\big(\E[|e_{n}|^{2}]
            +
            \E[|e_{n-M}|^{2}]\big)
            -
            \eta\Delta \E[|\Delta\sigma_{n}|^2]
            \notag
            \\&~+
            \Delta \E\big[-V(X(t_{n}),X_{n})
            + V(X(t_{n-M}),X_{n-M})\big].
      \end{align}
      For $I_{2}$, we utilize the properties of conditional expectation, the Cauchy--Schwarz inequality and the weighted Young inequality $ 2ab\leq \varepsilon a^2+\frac{b^2}{\varepsilon}$ for all $a,b\in \R$ with $\varepsilon = \Delta > 0$ to get
      \begin{align}\label{eq:I_2}
            I_2
            =&~
            2 \E \big[\E\big(\langle
            e_{n-1}-\Delta D_{n-1}, R_{n} \rangle
            ~|~ \F_{t_{n-1}} \big)\big]
            \notag
            \\=&~
            2 \E \big[\langle e_{n-1}-\Delta D_{n-1},
            \E(R_{n}~|~\F_{t_{n-1}})\rangle\big]
            \notag
            \\\leq&~
            \Delta\E\big[|e_{n-1}-\Delta D_{n-1}|^{2}\big]
            +
            \Delta^{-1}\E\big[|\E(R_{n}~|~\F_{t_{n-1}})|^{2}\big].
      \end{align}
      Concerning $I_{3}$, the weighted Young inequality $2ab \leq \varepsilon a^2+\frac{b^2}{\varepsilon}$ for all $a,b \in \R$ with $\varepsilon = \eta-1 > 0$ and the fact $\E\big[|\Delta\sigma_{n-1}\Delta W_{n-1}|^2\big] = \Delta\E\big[|\Delta\sigma_{n-1}|^2\big]$ help us to derive
      \begin{align}\label{eq:I_3}
            I_{3}
            \leq&~
            \E\big[|(e_{n}-\Delta D_{n})
            - (e_{n-1}-\Delta D_{n-1})|^2\big]
            +
            \E\big[|\Delta\sigma_{n-1}\Delta W_{n-1} + R_{n}|^2\big]
            \notag
            \\=&~
            \E\big[|(e_{n}-\Delta D_{n})
            - (e_{n-1}-\Delta D_{n-1})|^2\big]
            +
            \E\big[|\Delta \sigma_{n-1} \Delta W_{n-1}|^2\big]
            \notag
            \\&~+
            2\E\big[\langle \Delta \sigma_{n-1} \Delta W_{n-1},
            R_{n}\rangle\big]
            +
            \E\big[|R_{n}|^2\big]
            \notag
			\\\leq&~
            \E\big[|(e_{n}-\Delta D_{n})
            - (e_{n-1}-\Delta D_{n-1})|^{2}\big]
            +
            \eta\Delta\E\big[|\Delta\sigma_{n-1}|^{2}\big]
            +
            \frac{\eta}{\eta-1}\E\big[|R_{n}|^{2}\big].
      \end{align}
      Inserting \eqref{eq:I_1}, \eqref{eq:I_2} and \eqref{eq:I_3} into \eqref{eq:identity-E-en} shows
      \begin{align}
            &~\E \big[|e_{n} - \Delta D_{n}|^2\big]
            -
            \E \big[|e_{n-1} - \Delta D_{n-1}|^2\big]
            \notag
            \\\leq&~
            L\Delta\big(\E[|e_{n}|^2] + \E[|e_{n-M}|^{2}]\big)
            +
            \eta\Delta\big(\E[|\Delta\sigma_{n-1}|^2]
            -\E[|\Delta\sigma_{n}|^{2}]\big)
            \notag
            \\&~+
            \Delta \E\big[-V(X(t_{n}),X_{n})
            + V(X(t_{n-M}),X_{n-M})\big]
            +
            \frac{\eta}{\eta-1}\E\big[|R_{n}|^{2}\big]
            \notag
			\\&~+
            \Delta^{-1}\E\big[|\E(R_{n}~|~\F_{t_{n-1}})|^{2}\big]
            +
            \Delta\E\big[|e_{n-1}-\Delta D_{n-1}|^{2}\big].
            \notag
      \end{align}
      By summation, we have
      \begin{align}\label{eq:219219}
            &~\E\big[|e_{n} - \Delta D_{n}|^{2}\big]
            - \E\big[|e_{0} - \Delta D_{0}|^{2}\big]
            \notag
            \\\leq&~
            L\Delta \sum_{i=1}^{n}
            \big(\E[|e_{i}|^{2}] + \E[|e_{i-M}|^{2}]\big)
            +
			\eta\Delta \big(\E[|\Delta\sigma_{0}|^2]
			-
			\E[|\Delta\sigma_{n}|^2]\big)
            \notag
            \\&~+
            \Delta \sum_{i=1}^{n}
            \E\big[-V(X(t_{i}),X_{i})
            + V(X(t_{i-M}),X_{i-M})\big]
            +
            \frac{\eta}{\eta-1}\sum_{i=1}^{n}
            \E\big[|R_{i}|^2\big]
            \notag
            \\&~+
            \Delta^{-1} \sum_{i=1}^{n}
            \E\big[|\E(R_{i}~|~\F_{t_{i-1}})|^{2}\big]
            +
            \Delta \sum_{i=0}^{n-1}
            \E\big[|e_{i}-\Delta D_{i}|^{2}\big].
      \end{align}
      Here we claim that for any $n = 1,2, \cdots,N$,
      \begin{equation}\label{eq:hatV}
            \sum_{i=1}^{n}\E\big[-V(X(t_i),X_i)
            + V(X(t_{i-M}),X_{i-M})\big]
			\leq
			0.
      \end{equation}
      Actually, for the case $n \leq M$, we use $V \in \V(\R^d \times \R^d;\R^{+})$ and $V(X(t_{i-M}),X_{i-M}) = 0$ for $i = 1,2 ,\cdots, n$ to get
      \begin{equation*}\label{hatV1}
            \sum_{i=1}^{n}\E\big[ -V(X(t_i),X_i)
            + V(X(t_{i-M}),X_{i-M}) \big ]
            =
            - \sum_{i=1}^{n}\E\big[V(X(t_i),X_i)\big]
            \leq
            0.
      \end{equation*}		
      For the case $n > M$, the fact $V \in \V(\R^d \times \R^d;\R^{+})$ further shows
      \begin{equation*}\label{hatV2}
            \sum_{i=1}^{n}\E\big[-V(X(t_i),X_i)
            + V(X(t_{i-M}),X_{i-M})\big]
            =
            -\sum_{i=n-M+1}^{n}\E\big[V(X(t_i),X_i)\big]
			\leq
			0.
      \end{equation*}
      Noting that \eqref{eq:DCompression} gives
      \begin{align*}
            \E\big[|e_{i} - \Delta D_{i}|^2\big]
            \leq
            2\E\big[|e_{i}|^2\big]
            +
            2\E\big[|\Delta D_{i}|^2\big]
			\leq
            2\E\big[|e_{i}|^2\big]
            +
            2\nu^2\E\big[|e_{i-M}|^2\big],
      \end{align*}
      we take advantage of \eqref{eq:219219}, \eqref{eq:hatV} and $X(t_i) = X_i = \phi(t_{i}),i= -M,-M+1, \cdots, 0$ to derive
      \begin{align}\label{eq:interation1}
            \E \big[|e_{n}-\Delta D_{n}|^{2}\big]
            \leq&~
            L\Delta\sum_{i=1}^{n}
            \big(\E[|e_{i}|^2] + \E[|e_{i-M}|^2]\big)
            +
            2\Delta \sum_{i=0}^{n-1}
            \E\big[|e_{i}|^{2}\big]
            +
            2\nu^2\Delta \sum_{i=0}^{n-1}
            \E\big[|e_{i-M}|^{2}\big]
            \notag
            \\&~+
            \frac{\eta}{\eta-1}\sum_{i=1}^{N}
            \E\big[|R_{i}|^{2}\big]
			+
            \Delta^{-1} \sum_{i=1}^{N}
            \E\big[|\E(R_{i}~|~\F_{t_{i-1}})|^{2}\big],
      \end{align}
      and consequently
      \begin{align}\label{eq:interation}
            \E \big[|e_{n}-\Delta D_{n}|^{2}\big]
            \leq&~
            L\Delta \E[|e_{n}|^{2}]
            +
            (2L+2+2\nu^2) \Delta
            \sum_{i=0}^{n-1} \E[|e_{i}|^2]
            \notag
            \\&~+
            \frac{\eta}{\eta-1} \sum_{i=1}^{N}
            \E\big[|R_{i}|^{2}\big]
            +
            \Delta^{-1} \sum_{i=1}^{N}
            \E\big[|\E(R_{i}~|~\F_{t_{i-1}})|^2\big].
      \end{align}
      Applying \eqref{eq:DCompression} and the weighted Young inequality $ 2ab\leq \varepsilon a^2 + \frac{b^2}{\varepsilon} $ for all $a,b \in \R$ with $\varepsilon = \frac{1-\nu}{\nu} > 0$ yields
      \begin{align}\label{eq:e_young}
            \E\big[|e_{n}|^{2}\big]
            =&~
            \E\big[|e_{n}-\Delta D_{n}|^{2}\big]
            +
            2\E\big[\langle e_{n}-\Delta D_{n},
            \Delta D_{n} \rangle \big]
            +
            \E\big[|\Delta D_{n}|^{2}\big]
            \notag
            \\\leq&~
			(1+\varepsilon^{-1})
            \E\big[|e_{n}-\Delta D_{n}|^2\big]
            +
            (1+\varepsilon)
            \E\big[|\Delta D_{n}|^{2}\big]
            \notag
			\\\leq&~
            \frac{1}{1-\nu}
            \E\big[|e_{n}-\Delta D_{n}|^2\big]
            +
			\nu\E\big[|e_{n-M}|^{2}\big].
      \end{align}
      Observing that for any $n = 1,2,\cdots,N$, there exists $r \in \{1,2,\cdots, \lceil\frac{T}{\tau}\rceil\}$ such that $n-rM \leq 0$ and $n-(r-1)M > 0$,
      we use \eqref{eq:e_young} and $e_{n-rM} = 0$ to show
      \begin{align}\label{eq:e-interation}
            \E\big[|e_{n}|^2\big]
            \leq&~
			\frac{1}{1-\nu}
            \E\big[|e_{n}-\Delta D_{n}|^{2}\big]
            +
            \frac{\nu}{1-\nu}
            \E\big[|e_{n-M}-\Delta D_{n-M}|^2\big]
            +
            \nu^2 \E\big[|e_{n-2M}|^{2}\big]
            \notag
            \\\leq&~\cdots
            \notag
            \\\leq&~
			\frac{1}{1-\nu}
            \E\big[|e_{n}-\Delta D_{n}|^{2}\big]
            +
            \nu^{r} \E\big[|e_{n-rM}|^{2}\big]
            +
            \frac{1}{1-\nu}
            \Big(\nu\E\big[|e_{n-M}-\Delta D_{n-M}|^2\big]
            \notag
            \\&~+
            \nu^{2}\E\big[|e_{n-2M}-\Delta D_{n-2M}|^2\big]
            +
            \cdots
            +
            \nu^{r-1}\E\big[|e_{n-(r-1)M}
            - \Delta D_{n-(r-1)M}|^2\big]\Big)
            \notag
            \\=&~
            \frac{1}{1-\nu}
            \E\big[|e_{n}-\Delta D_{n}|^2\big]
            +
            \frac{1}{1-\nu}\sum_{j=1}^{r-1}\nu^{j}
            \E\big[|e_{n-jM} - \Delta D_{n-jM}|^2\big].	
	  \end{align}
      As \eqref{eq:interation1} indicates that for any $j = 1, 2,\cdots,r-1$,
      \begin{align*}
            \E\big[|e_{n-jM}-\Delta D_{n-jM}|^{2}\big]
            \leq&~
            (2L+2+2\nu^2) \Delta
            \sum_{i=0}^{n-1} \E[|e_{i}|^2]
            \notag
            +
            \frac{\eta}{\eta-1} \sum_{i=1}^{N}
            \E\big[|R_{i}|^{2}\big]
            \notag
            \\&~+
            \Delta^{-1} \sum_{i=1}^{N}
            \E\big[|\E(R_{i}~|~\F_{t_{i-1}})|^2\big],
      \end{align*}
      we use \eqref{eq:interation} and \eqref{eq:e-interation} to see that
      \begin{align}
            \E \big[|e_{n}|^2 \big]
			\leq&~
            \frac{L\Delta}{1-\nu}\E\big[|e_{n}|^2\big]
            +
			\bigg(\sum_{j=0}^{r-1}\nu^{j}\bigg)
            \frac{2L+2+2\nu^2}{1-\nu}
            \Delta\sum_{i=0}^{n-1}\E[|e_{i}|^2]
            \notag
			\\&~+
            \frac{1}{1-\nu}
            \bigg(\sum_{j=0}^{r-1}\nu^{j}\bigg)
			\bigg(\frac{\eta}{\eta-1}
            \sum_{i=1}^{N} \E\big[|R_{i}|^{2}\big]
            +
			\Delta^{-1} \sum_{i=1}^{N}
            \E\big[|\E(R_{i}~|~\F_{t_{i-1}})|^2\big]\bigg).
            \notag
      \end{align}
      Together with $\nu + L\Delta \leq \rho < 1$, we get
      \begin{align}
            \E \big[|e_{n}|^2 \big]
			\leq&~
			\bigg(\sum_{j=0}^{\lceil\frac{T}{\tau}\rceil-1}\nu^{j}\bigg)
            \frac{2L+2+2\nu^{2}}{1-\rho}
            \Delta\sum_{i=0}^{n-1}\E[|e_{i}|^2]
            \notag
			+
            \frac{\eta}{(1-\rho)(\eta-1)}
            \\&~\times\bigg(\sum_{j=0}^{\lceil\frac{T}{\tau}\rceil-1}\nu^{j}\bigg)
			\bigg(
            \sum_{i=1}^{N} \E\big[|R_{i}|^{2}\big]
            +
			\Delta^{-1} \sum_{i=1}^{N}
            \E\big[|\E(R_{i}~|~\F_{t_{i-1}})|^2\big]\bigg).
            \notag
      \end{align}
      By the discrete Gronwall inequality
      \cite[Lemma 3.4]{mao2013strong}, we obtain \eqref{eq:upper error bounds} and end the proof.
\end{proof}

\section{Mean square convergence rate}\label{section3}

This section aims to develop the mean square convergence rate of \eqref{eq:BEmethodintro} on any finite time $[-\tau, T]$. For this purpose, the H\"{o}lder continuity of the exact solutions of NSDDEs \eqref{eq:NSDDEsdiffform} is needed.

\begin{lemma}\label{lem:holder continuity}
      Suppose that Assumptions \ref{ass:mainassone} and \ref{ass:mainasstwo} hold. If $\phi \in \mathbb{C}([-\tau,0];\R^{d})$ satisfies the global Lipschitz continuous condition, i.e., there exists $K > 0$ such that
      \begin{equation}\label{eq:phiLipschitz}
            |\phi(t)-\phi(s)|
            \leq
            K|t-s|,
            \quad
            s,t \in [-\tau,0],
      \end{equation}
      then for any $\delta \in [2,\frac{p^*}{q}]$ and  $0 \leq s < t \leq T $ with $t-k\tau \in [-\tau,0], s-k\tau \in [-\tau,0], k \in \N$, there exists $C > 0$, depending on $T, \phi$ and $k$, such that
      \begin{equation}\label{eq:holder continuity}
            |X(t)-X(s)|_{L^{\delta}(\Omega;\R^d)}
            \leq
            C\big((t-s)+(t-s)^\frac{1}{2}\big).
      \end{equation}
\end{lemma}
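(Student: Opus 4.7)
The plan is to prove the estimate by induction on the integer $k$ that indexes the delay interval $[(k-1)\tau, k\tau]$ containing both $s$ and $t$, combining the integral formulation \eqref{eq:NSDDEsintegral}, the contraction property of $D$ from \eqref{eq:DCompression}, the polynomial growth bound \eqref{eq:bsuperlinear} on $b$ and $\sigma$, and the a priori moment estimate \eqref{eq:Xmomentbound}. Note that the assumption $t - k\tau, s - k\tau \in [-\tau,0]$ is precisely $s, t \in [(k-1)\tau, k\tau]$, so the shifted times $s-\tau, t-\tau$ fall in the previous interval $[(k-2)\tau, (k-1)\tau]$.

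For the base case $k = 1$, we have $s, t \in [0, \tau]$, so $X(t-\tau) = \phi(t-\tau)$ and $X(s-\tau) = \phi(s-\tau)$. Subtracting \eqref{eq:NSDDEsintegral} evaluated at $s$ and $t$ gives
\begin{align*}
      X(t) - X(s)
      =&~
      D(\phi(t-\tau)) - D(\phi(s-\tau))
      + \int_{s}^{t} b(u, X(u), X(u-\tau))\, \diff u
      \\&~+
      \int_{s}^{t} \sigma(u, X(u), X(u-\tau))\, \diff W(u).
\end{align*}
Taking the $L^{\delta}(\Omega;\R^{d})$-norm and invoking \eqref{eq:DCompression} together with \eqref{eq:phiLipschitz} bounds the neutral difference by $\nu K(t-s)$. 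For the drift term, H\"older's inequality in time followed by the super-linear bound \eqref{eq:bsuperlinear} and the moment estimate \eqref{eq:Xmomentbound} (whose applicability requires $q\delta \leq p^{*}$) yields a contribution of order $(t-s)$. For the stochastic integral, the Burkholder--Davis--Gundy inequality is available since $\delta \geq 2$, and after applying \eqref{eq:bsuperlinear} and \eqref{eq:Xmomentbound} it produces a contribution of order $(t-s)^{1/2}$.

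For the inductive step with $k \geq 2$, the same decomposition of $X(t) - X(s)$ applies, but now the shifted times lie in $[(k-2)\tau, (k-1)\tau]$. The contraction \eqref{eq:DCompression} gives
\[
      |D(X(t-\tau)) - D(X(s-\tau))|_{L^{\delta}(\Omega;\R^{d})}
      \leq
      \nu \, |X(t-\tau) - X(s-\tau)|_{L^{\delta}(\Omega;\R^{d})},
\]
and the inductive hypothesis at level $k-1$ bounds the right-hand side by $C_{k-1}\bigl((t-s) + (t-s)^{1/2}\bigr)$. The drift and diffusion contributions are estimated exactly as in the base case, and summing the three bounds yields \eqref{eq:holder continuity} with a constant $C_{k}$ that depends on $T, \phi$ and $k$, as claimed.

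The principal obstacle is the super-linear growth of $b$ and $\sigma$ with exponent $q$ in both the state and delay variables: in order to apply H\"older's inequality and the Burkholder--Davis--Gundy inequality and then invoke \eqref{eq:Xmomentbound}, one needs uniform boundedness of moments of order $q\delta$ of $X$, which is exactly why the admissible range of $\delta$ in the statement is $\left[2, p^{*}/q\right]$. A secondary, bookkeeping difficulty is that the constant $C_{k}$ grows with $k$ along the induction, but since $k$ is bounded by the fixed integer $\lceil T/\tau \rceil$, the final constant remains finite and depends only on $T, \phi$ and $k$.
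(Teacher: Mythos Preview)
Your proposal is correct and follows essentially the same approach as the paper: decompose $X(t)-X(s)$ via \eqref{eq:NSDDEsintegral} into the neutral term, the drift integral, and the stochastic integral; bound the latter two using \eqref{eq:bsuperlinear}, H\"older/BDG and the moment bound \eqref{eq:Xmomentbound} (exploiting $q\delta\leq p^{*}$); and reduce the neutral term to the previous delay interval via \eqref{eq:DCompression}. The only cosmetic difference is that the paper first derives the single recursive inequality
\[
|X(t)-X(s)|_{L^{\delta}(\Omega;\R^d)} \leq C\Big((t-s)+(t-s)^{1/2}+|X(t-\tau)-X(s-\tau)|_{L^{\delta}(\Omega;\R^d)}\Big)
\]
and then unrolls it $k$ times down to $|\phi(t-k\tau)-\phi(s-k\tau)|$, whereas you package the same recursion as a formal induction on $k$; both presentations are equivalent.
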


\begin{proof}
      Owing to \eqref{eq:NSDDEsintegral} and the Minkowski inequality, we have
      \begin{align*}
            |X(t)-X(s)| _{L^{\delta}(\Omega;\R^d)}
            \leq&~
            |D(X(t-\tau))-D(X(s-\tau))|_{L^{\delta}(\Omega;\R^d)}
            \notag
			\\&~+
			\Big|\int_{s}^{t} b(r,X(r),X(r-\tau))\diff{r}
            \Big|_{L^{\delta}(\Omega;\R^d)}
            \notag
            \\&~+
            \Big|\int_{s}^{t} \sigma(r,X(r),X(r-\tau))\diff{W(r)}
            \Big|_{L^{\delta}(\Omega;\R^d)}.
      \end{align*}
      It follows from \eqref{eq:DCompression}, \eqref{eq:bsuperlinear}, the H\"{o}lder inequality and the Burkholder--Davis--Gundy inequality that
      \begin{align*}
            &~|X(t)-X(s)| _{L^{\delta}(\Omega;\R^d)}
            \\\leq&~
            \nu|X(t-\tau)-X(s-\tau)|_{L^{\delta}(\Omega;\R^d)}
			+
			\int_{s}^{t} \big|b(r,X(r),X(r-\tau))
            \big|_{L^{\delta}(\Omega;\R^d)}\diff{r}
            \\&~+
            C\bigg((t-s)^{\frac{\delta-2}{2}}
            \int_{s}^{t} \big|\sigma(r,X(r),X(r-\tau))
            \big|_{L^{\delta}(\Omega;\R^d)}^{\delta}\diff{r}
            \bigg)^{\frac{1}{\delta}}
            \\\leq&~
            \nu|X(t-\tau)-X(s-\tau)|_{L^{\delta}(\Omega;\R^d)}
			+
			C\int_{s}^{t} 1 + |X(r)|_{L^{q\delta}(\Omega;\R^d)}^{q}
            + |X(r-\tau)|_{L^{q\delta}(\Omega;\R^d)}^{q} \diff{r}
            \\&~+
            C\bigg((t-s)^{\frac{\delta-2}{2}}
            \int_{s}^{t} 1 + |X(r)|_{L^{q\delta}(\Omega;\R^d)}^{q\delta}
            + |X(r-\tau)|_{L^{q\delta}(\Omega;\R^d)}^{q\delta} \diff{r}
            \bigg)^{\frac{1}{\delta}}.
      \end{align*}
      By \eqref{eq:Xmomentbound} and $q\delta \leq p^{*}$, one gets
      \begin{equation*}
            |X(t)-X(s)|_{L^{\delta}(\Omega;\R^d)}
            \leq
            C\Big((t-s) + (t-s)^\frac{1}{2} +
            |X(t-\tau)-X(s-\tau)|_{L^{\delta}(\Omega;\R^d)}\Big).
      \end{equation*}
      Applying iterative arguments leads to
      \begin{align*}
            &~|X(t)-X(s)|_{L^{\delta}(\Omega;\R^d)}
            \leq
            C \Big( (t-s)+(t-s)^\frac{1}{2}
            \\&~+
            C\big((t-s)+(t-s)^{\frac{1}{2}}
            +
            |X(t-2\tau)-X(s-2\tau)|_{L^{\delta}(\Omega;\R^d)}\big)\Big)
			\\\leq&~\cdots
			\\\leq&~
			(C + C^{2} + \cdots + C^{k})
            \big((t-s)+(t-s)^\frac{1}{2}\big)
            \\&~+
            C^{k}|X(t-k\tau)-X(s-k\tau)|_{L^{\delta}(\Omega;\R^d)}
			\\\leq&~
            C\big((t-s)+(t-s)^\frac{1}{2}
            +
            |\phi(t-k\tau)-\phi(s-k\tau)|\big)
            \\\leq&~
			C\big((t-s) + (t-s)^\frac{1}{2} + (t-s)\big)
			\\\leq&~
			C\big((t-s)+(t-s)^\frac{1}{2}\big),
      \end{align*}
      where \eqref{eq:phiLipschitz} has been used.
      Thus we complete the proof.
\end{proof}

Now we are able to derive the mean square convergence rate of \eqref{eq:BEmethodintro} on any finite time $[-\tau, T]$ with the help of the previously established upper mean square error bound. Besides, we would like to mention that the convergence result in Theorem \ref{th:mul-noise-rate} still holds for the backward Euler method applied to SDDEs, i.e., NSDDEs \eqref{eq:NSDDEsdiffform} with the neutral term $D$ vanishing.

\begin{theorem}\label{th:mul-noise-rate}
      Suppose that $\phi \in \mathbb{C}([-\tau,0];\R^{d})$ satisfies the global Lipschitz continuous condition \eqref{eq:phiLipschitz} and that Assumptions \ref{ass:mainassone} and \ref{ass:mainasstwo} hold with $\nu + 2L\Delta \leq \rho$ for some $\rho \in (\nu,1)$. Then there exists $C > 0$, independent of $\Delta$, such that for all $n = 1,2,\cdots,N$,
      \begin{equation}\label{eq:covorder}
            \E\big[|X(t_n)-X_n|^{2}\big]
            \leq
			C \Delta.
      \end{equation}
\end{theorem}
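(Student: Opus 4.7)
The overall strategy is to combine the a priori bound in Lemma \ref{upper error bounds} with sharp per-step estimates on the local truncation error. Specifically, once we establish
\[
\E[|R_i|^2] \leq C\Delta^2 \qquad \text{and} \qquad \E[|\E(R_i \mid \F_{t_{i-1}})|^2] \leq C\Delta^3
\]
for every $i = 1, \dots, N$, summation over $i$ (using $N = T/\Delta$) shows that each of the two sums on the right-hand side of \eqref{eq:upper error bounds} is bounded by $C T\Delta$, which delivers \eqref{eq:covorder} at once.

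To establish the first per-step bound, I would rewrite the local truncation error \eqref{eq:localerror} via the integral form \eqref{eq:NSDDEsintegral} as
\[
R_{n+1} = \int_{t_n}^{t_{n+1}} \bigl[b(s, X(s), X(s-\tau)) - b(t_{n+1}, X(t_{n+1}), X(t_{n+1-M}))\bigr] ds + \int_{t_n}^{t_{n+1}} \bigl[\sigma(s, X(s), X(s-\tau)) - \sigma(t_n, X(t_n), X(t_{n-M}))\bigr] dW(s).
\]
Applying Cauchy--Schwarz to the drift integral and the It\^o isometry to the diffusion integral reduces $\E[|R_{n+1}|^2]$ to a time average of the $L^2(\Omega;\R^d)$-norms of the two bracketed differences. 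By the polynomial growth condition \eqref{eq:bpolynomialgrow}, each bracket is dominated by a polynomial factor in $|X|$ times an increment of $X$. I would then use H\"older's inequality with conjugate exponents $\alpha = \tfrac{p^{*}}{2(q-1)}$ and $\beta = \tfrac{p^{*}}{p^{*} - 2(q-1)}$ to decouple these two factors; the hypothesis $p^{*} \geq 4q - 2$ in Assumption \ref{ass:mainassone} is exactly what ensures $2\beta \leq p^{*}/q$, so the polynomial factor is controlled by the moment bound \eqref{eq:Xmomentbound} while Lemma \ref{lem:holder continuity} with $\delta = 2\beta$ bounds the increment by $C\sqrt{\Delta}$. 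Squaring and integrating in $s$ over $[t_n, t_{n+1}]$ then yields $\E[|R_{n+1}|^2] \leq C\Delta^2$.

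The conditional-expectation estimate benefits from a cancellation. The diffusion integral is a martingale increment whose conditional expectation given $\F_{t_n}$ vanishes; likewise the Euler--Maruyama increment $\sigma(t_n, X(t_n), X(t_{n-M}))\Delta W_n$ is centred. Only the drift integral survives, and by Jensen's inequality plus the tower property,
\[
\E[|\E(R_{n+1} \mid \F_{t_n})|^2] \leq \Delta \int_{t_n}^{t_{n+1}} \E[|b(s, X(s), X(s-\tau)) - b(t_{n+1}, X(t_{n+1}), X(t_{n+1-M}))|^2]\, ds \leq C\Delta^{3},
\]
where the final inequality simply reuses the drift bound derived in the previous paragraph. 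Inserting both per-step estimates into Lemma \ref{upper error bounds} completes the proof. The main obstacle I anticipate is the exponent bookkeeping in the drift/diffusion analysis: the polynomial growth exponent $q$, the available moment $p^{*}$, and the H\"older regularity exponent $\delta$ in Lemma \ref{lem:holder continuity} must all be balanced simultaneously, and the budget $p^{*} \geq 4q - 2$ leaves essentially no slack. A secondary subtlety is that the argument tacitly uses some time-regularity of $b$ and $\sigma$ to absorb the temporal mismatch between $s$ and $t_{n+1}$ (respectively $t_n$); either this must be read as an implicit part of Assumption \ref{ass:mainassone}, or a triangle-inequality step decoupling the temporal and spatial differences must be inserted by hand.
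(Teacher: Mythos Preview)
Your proposal is correct and mirrors the paper's own proof almost step for step: rewrite $R_i$ via \eqref{eq:NSDDEsintegral}, apply Cauchy--Schwarz/It\^o isometry, invoke \eqref{eq:bpolynomialgrow}, then split with H\"older (the paper fixes the critical exponent pair $\tfrac{4q-2}{q-1}$ and $\tfrac{4q-2}{q}$, which is just your choice at $p^{*}=4q-2$) and finish with \eqref{eq:Xmomentbound} and Lemma~\ref{lem:holder continuity}; the conditional estimate is handled identically by dropping the martingale part and reusing the drift bound. Your closing caveat about time-regularity of $b,\sigma$ is well taken---the paper applies \eqref{eq:bpolynomialgrow} directly to $b(s,\cdot,\cdot)-b(t_i,\cdot,\cdot)$ without isolating the $s\mapsto t_i$ shift, so it tacitly makes the same implicit assumption you flag.
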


\begin{proof}
      In view of Lemma \ref{upper error bounds}, it suffices to estimate $\E\big[|R_i|^2\big]$ and $\E\big[|\E({R_i}~|~ \F_{t_{i-1}})|^{2}\big]$ for $i = 1,2,\cdots,N$. We first note that \eqref{eq:NSDDEsintegral} and \eqref{eq:localerror} promise
      \begin{align}\label{eq:localerror integral}
            R_{i}
            =&~
            \int_{t_{i-1}}^{t_{i}}
            b(s,X(s),X(s-\tau))
            - b(t_{i},X(t_{i}),X(t_{i-M})) \diff{s}
            \notag
            \\&~+
            \int_{t_{i-1}}^{t_{i}}
            \sigma(s,X(s),X(s-\tau))
            - \sigma(t_{i-1},X(t_{i-1}),X(t_{i-1-M})) \diff{W(s)}.
      \end{align}
      It follows from the H\"{o}lder inequality and It\^{o} isometry that
      \begin{align*}
            \E\big[|R_{i}|^{2}\big]
            \leq&~
            2\E\bigg[\bigg|\int_{t_{i-1}}^{t_{i}}
            b(s,X(s),X(s-\tau))
            - b(t_{i},X(t_{i}),X(t_{i-M})) \diff{s}\bigg|^{2}\bigg]
            \\&~+
            2\E\bigg[\bigg|\int_{t_{i-1}}^{t_{i}}
            \sigma(s,X(s),X(s-\tau))
            - \sigma(t_{i-1},X(t_{i-1}),X(t_{i-1-M})) \diff{W(s)}
            \bigg|^{2}\bigg]
            \\\leq&~
            2\Delta\int_{t_{i-1}}^{t_{i}}\E\big[|
            b(s,X(s),X(s-\tau))
            - b(t_{i},X(t_{i}),X(t_{i-M}))|^{2}\big] \diff{s}
            \\&~+
            2\int_{t_{i-1}}^{t_{i}}\E\big[|
            \sigma(s,X(s),X(s-\tau))
            - \sigma(t_{i-1},X(t_{i-1}),X(t_{i-1-M}))|^{2}\big] \diff{s}.
      \end{align*}
      Utilizing \eqref{eq:bpolynomialgrow} leads to
      \begin{align*}
            \E\big[|R_{i}|^{2}\big]
      \leq&~
            C\Delta\int_{t_{i-1}}^{t_{i}}\E\big[
            (1 + |X(s)|^{q-1} + |X(s-\tau)|^{q-1}
            + |X(t_{i})|^{q-1} + |X(t_{i-M})|^{q-1})^{2}
            \\&~\times
            (|X(s)-X(t_{i})| + |X(s-\tau)-X(t_{i-M})|)^{2}\big] \diff{s}
            \\&~+
            C\int_{t_{i-1}}^{t_{i}}\E\big[
            (1 + |X(s)|^{q-1} + |X(s-\tau)|^{q-1}
            + |X(t_{i-1})|^{q-1} + |X(t_{i-1-M})|^{q-1})^{2}
            \\&~\times
            (|X(s)-X(t_{i-1})| + |X(s-\tau)-X(t_{i-1-M})|)^{2}\big] \diff{s}.
      \end{align*}
      Along with the H\"{o}lder inequality, \eqref{eq:Xmomentbound} and Lemma \ref{lem:holder continuity}, we deduce that
      \begin{align}\label{eq:ERi2}
            \E\big[|R_{i}|^{2}\big]
      \leq&~
            C\Delta\int_{t_{i-1}}^{t_{i}}\Big(\E\big[
            (1 + |X(s)|^{q-1} + |X(s-\tau)|^{q-1}
            \notag
            \\&~+ |X(t_{i})|^{q-1}
            + |X(t_{i-M})|^{q-1})^{\frac{4q-2}{q-1}}
            \big]\Big)^{\frac{2(q-1)}{4q-2}}
            \notag
            \\&~\times
            \Big(\E\big[(|X(s)-X(t_{i})|
            + |X(s-\tau)-X(t_{i-M})|)^{\frac{4q-2}{q}}\big]
            \Big)^{\frac{2q}{4q-2}} \diff{s}
            \notag
            \\&~+
            C\int_{t_{i-1}}^{t_{i}}\Big(\E\big[
            (1 + |X(s)|^{q-1} + |X(s-\tau)|^{q-1}
            \notag
            \\&~+
            |X(t_{i-1})|^{q-1}
            +
            |X(t_{i-1-M})|^{q-1})^{\frac{4q-2}{q-1}}
            \big]\Big)^{\frac{2(q-1)}{4q-2}}
            \notag
            \\&~\times
            \Big(\E\big[(|X(s)-X(t_{i-1})|
            + |X(s-\tau)-X(t_{i-1-M})|)^{\frac{4q-2}{q}}\big]
            \Big)^{\frac{2q}{4q-2}} \diff{s}
            \notag
            \\\leq&~
            C\Delta^{2}.
      \end{align}
      Concerning $\E\big[|\E({R_i}~|~ \F_{t_{i-1}})|^{2}\big]$, we apply \eqref{eq:localerror integral}, the conditional Jensen inequality and the previous arguments used for \eqref{eq:ERi2} to derive
      \begin{align}\label{eq:E-R-F}
            &~\E\big[|\E({R_i}~|~\F_{t_{i-1}})|^{2}\big]
            \notag
      \\=&~
			\E\bigg[\bigg| \E \bigg( \int_{t_{i-1}}^{t_i}
			b(s,X(s),X(s-\tau)) - b(t_i,X(t_i),X(t_{i-M})) \diff{s}
			~\Big|~ \F_{t_{i-1}}\bigg)\bigg|^{2}\bigg]
            \notag
			\\\leq&~
			\E\bigg[\E\bigg(\Big|\int_{t_{i-1}}^{t_i}
			b(s,X(s),X(s-\tau)) - b(t_i,X(t_i),X(t_{i-M})) \diff{s}
			\Big|^2 ~\Big|~ \F _{t_{i-1}} \bigg) \bigg]
            \notag
			\\=&~
			\E\bigg[\bigg|\int_{t_{i-1}}^{t_i}
			b(s,X(s),X(s-\tau)) - b(t_i,X(t_i),X(t_{i-M})) \diff{s}
			\bigg|^{2}\bigg]
            \notag
			\\\leq&~
			C\Delta^3.
      \end{align}
      As a consequence of \eqref{eq:ERi2}, \eqref{eq:E-R-F} and Lemma \ref{upper error bounds}, we obtain \eqref{eq:covorder} and complete the proof.
\end{proof}

\section{Mean square exponential stability}\label{section4}

This section devotes to discussing the exponential stability of NSDDEs \eqref{eq:NSDDEsdiffform} and the backward Euler method \eqref{eq:BEmethodintro} on the infinite time interval $[-\tau,+\infty)$ in the mean square sense. We will prove that the backward Euler method can reproduce the mean square exponential stability of the original NSDDEs. The mean square exponential stability states that the second order moments of any two solutions with different initial values  will tend to zero exponentially fast; see, e.g., \cite{mao1994exponential, mao2008stochastic, shaikhet2013lyapunov} for more details. Let us first give the precise definition of the mean square exponential stability.
\begin{definition}\label{def:expmsstab}
      The exact solution of \eqref{eq:NSDDEsdiffform} is said to be mean square exponentially stable if for any two solutions $\{X(t)\}_{ t\geq -\tau}$ and $\{\bar{X}(t)\}_{t \geq -\tau}$ with different initial values $\phi,\bar{\phi} \in \C (\left [-\tau,0 \right ] ;\R^d)$ respectively, there exists a constant $\omega > 0$ such that
	  \begin{equation}
			\varlimsup_{t \to \infty} \frac{1}{t}
            \ln\E\big[|X(t)-\bar{X}(t)|^2\big]
			\leq
			-\omega.
	  \end{equation}
\end{definition}

In order to develop the exponential stability of NSDDEs \eqref{eq:NSDDEsdiffform}, we put another version of generalized monotonicity condition.

\begin{assumption}\label{ass:stab}
      Assume that there exist constants $\zeta > 1, c_1 >c_2 >0, c_3 > c_4 \geq 0$ and function $V \in \mathcal{V}(\R^{d}\times\R^{d};\R^{+})$ such that
	  \begin{align}\label{eq:genmonostability}
            &~2\langle  (x_{1}-x_{2})-(D(y_{1})-D(y_{2})),
			b(t,x_{1},y_{1})-b(t,x_{2},y_{2}) \rangle
			+
			\zeta |\sigma(t,x_{1},y_{1})-\sigma(t,x_{2},y_{2})|^{2}
            \notag
			\\\leq&~
			- c_1 |x_{1}-x_{2}|^{2}
			+ c_2 |y_{1}-y_{2}|^{2}
			- c_3 V(x_1,x_2)
			+ c_4 V(y_1,y_2),
			\quad
            t \geq 0, x_1,x_2,y_1,y_2 \in \R^{d}.
	  \end{align}
\end{assumption}

The following result concerns the exponential stability in mean square sense of the origin equations \eqref{eq:NSDDEsdiffform}. To state clearly, in what follows we adopt the conventional definition $\frac{1}{0} = +\infty$ and use ${\bf{1}}_{\{c_{4} > 0\}}$ to denote $1$ for the case $c_{4} > 0$  and $+\infty$ for $c_{4} = 0$.

\begin{theorem}\label{eqsolution-stab}
      Suppose that Assumptions \ref{ass:mainassone} and \ref{ass:stab} hold. Then for any two different initial values $\phi,\bar{\phi} \in \C ([-\tau,0];\R^{d})$ of \eqref{eq:NSDDEsdiffform}, the corresponding solutions $\{X(t)\}_{t \geq -\tau}$ and $\{\bar{X}(t)\}_{t \geq -\tau}$ satisfy
      \begin{equation}\label{eq:exactexponential}
            \varlimsup_{t \to \infty} \frac{1}{t}
            \ln\E\big[|X(t)-\bar{X}(t)|^2 \big]
            \leq
            -\omega
      \end{equation}
      for any $\omega \in \big(0,\omega_{1} \wedge \omega_{2}
      \wedge \frac{1}{\tau}\ln (\frac{c_3}{c_4} {\bf{1}}_{\{c_{4} > 0\}})\big)$ with $\omega_{2} \in (0,\frac{1}{\tau}\ln\frac{1}{\nu})$, where $\omega_1$ is the unique root of
      \begin{equation*}
	        -c_1 +(1+\nu)\omega_1 + c_2 e^{\omega_1 \tau}
            + (1+\nu)\nu\omega_1 e^{\omega_1 \tau} = 0.
      \end{equation*}
\end{theorem}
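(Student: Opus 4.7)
The plan is to apply It\^o's formula to $e^{\omega t}|Z(t)|^2$, where $Y(t) := X(t)-\bar{X}(t)$ denotes the difference of the two solutions starting from $\phi,\bar{\phi}$ and $Z(t) := Y(t) - [D(X(t-\tau)) - D(\bar{X}(t-\tau))]$ is the neutralized difference. By \eqref{eq:NSDDEsintegral}, $Z$ satisfies $\diff Z(t) = \Delta b(t) \diff t + \Delta\sigma(t) \diff W(t)$, where $\Delta b(t)$ and $\Delta\sigma(t)$ denote the coefficient differences along the two trajectories. It\^o's formula then produces the Lyapunov increment $2\langle Z(t),\Delta b(t)\rangle + |\Delta\sigma(t)|^2$, and since $\zeta>1$ permits the bound $|\Delta\sigma|^2 \leq \zeta|\Delta\sigma|^2$, \eqref{eq:genmonostability} dominates this increment (after taking expectation to kill the stochastic integral) by $-c_1\E|Y(t)|^2 + c_2\E|Y(t-\tau)|^2 - c_3\E V(X(t),\bar{X}(t)) + c_4\E V(X(t-\tau),\bar{X}(t-\tau))$.

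Next, I would multiply by $e^{\omega t}$, expand $\diff(e^{\omega s}|Z(s)|^2) = \omega e^{\omega s}|Z(s)|^2 \diff s + e^{\omega s}\diff|Z(s)|^2$, integrate from $0$ to $t$, and bound the $\omega e^{\omega s}\E|Z(s)|^2$ piece via the weighted Young inequality with weight $\varepsilon = \nu$ together with \eqref{eq:DCompression}, which yield $|Z(s)|^2 \leq (1+\nu)|Y(s)|^2 + \nu(1+\nu)|Y(s-\tau)|^2$. Performing the change of variables $r = s-\tau$ in each delayed integral (so that $\int_0^t e^{\omega s}\E|Y(s-\tau)|^2\diff s$ becomes $e^{\omega\tau}\int_{-\tau}^{t-\tau}e^{\omega r}\E|Y(r)|^2\diff r$, and similarly for the $V$-integral), the coefficient multiplying $\int_0^t e^{\omega s}\E|Y(s)|^2\diff s$ collapses to exactly $-c_1 + (1+\nu)\omega + c_2 e^{\omega\tau} + (1+\nu)\nu\omega e^{\omega\tau}$, which vanishes at $\omega=\omega_1$ and is nonpositive for $\omega<\omega_1$; the coefficient multiplying $\int_0^t e^{\omega s}\E V(X(s),\bar{X}(s))\diff s$ simplifies to $-c_3 + c_4 e^{\omega\tau}$, nonpositive whenever $\omega<\frac{1}{\tau}\ln(\tfrac{c_3}{c_4}\mathbf{1}_{\{c_4>0\}})$. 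The remaining boundary contributions from integrals over $[-\tau,0]$ depend only on $\phi,\bar{\phi}$, so one obtains $\E|Z(t)|^2 \leq C e^{-\omega t}$.

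To transfer this decay from $Z$ to $Y$, which is where the constraint $\omega < \omega_2 < \frac{1}{\tau}\ln\frac{1}{\nu}$ is used, apply the weighted Young inequality to $Y(t) = Z(t) + [D(X(t-\tau))-D(\bar{X}(t-\tau))]$ with the tailored weight $\varepsilon = \nu/(1-\nu)$; combined with \eqref{eq:DCompression} this gives $|Y(t)|^2 \leq \tfrac{1}{1-\nu}|Z(t)|^2 + \nu|Y(t-\tau)|^2$. Multiplying by $e^{\omega t}$, taking expectation, and introducing $w(t) := \sup_{-\tau\leq s\leq t}e^{\omega s}\E|Y(s)|^2$, the delayed term contributes exactly $\nu e^{\omega\tau}w(t)$ while the previous step bounds $e^{\omega s}\E|Z(s)|^2$ uniformly, yielding $w(t) \leq C_0 + \nu e^{\omega\tau}w(t)$. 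The bound $\omega<\omega_2<\frac{1}{\tau}\ln\frac{1}{\nu}$ guarantees $\nu e^{\omega\tau}<1$, so rearrangement gives a uniform bound on $w(t)$, hence $\E|Y(t)|^2 \leq C' e^{-\omega t}$, which is \eqref{eq:exactexponential}.

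I expect the main technical obstacle to be the delicate matching of the two Young weights: the weight $\varepsilon=\nu$ in the first passage is precisely what makes the coefficient of $\int_0^t e^{\omega s}\E|Y(s)|^2\diff s$ assemble into the characteristic expression defining $\omega_1$, while the weight $\varepsilon=\nu/(1-\nu)$ in the second passage is precisely what makes the supremum contraction constant equal the sharp value $\nu e^{\omega\tau}$; any other choice of weights would yield strictly weaker thresholds than the ones claimed. A minor case split is also needed for $c_4=0$, handled via the convention $\tfrac{1}{0}=+\infty$ that makes the $V$-side constraint vacuous, but the algebra then goes through without change.
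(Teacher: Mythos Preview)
Your proposal is correct and follows essentially the same route as the paper: It\^o's formula on $e^{\omega t}|Z(t)|^2$, the Young inequality with weight $\nu$ to expand $|Z|^2$ into $(1+\nu)|Y|^2 + \nu(1+\nu)|Y(\cdot-\tau)|^2$, the shift $s\mapsto s-\tau$ in the delayed integrals to assemble the characteristic functions $H_1(\omega)$ and $H_2(\omega)=c_4e^{\omega\tau}-c_3$, and then the second Young inequality (your $\varepsilon=\nu/(1-\nu)$ is the reciprocal of the paper's $\varepsilon=(1-\nu)/\nu$, a harmless convention swap) to pass from $Z$ back to $Y$ via the supremum contraction with factor $\nu e^{\omega\tau}$. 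The case split $c_4=0$ is handled identically through the $\tfrac{1}{0}=+\infty$ convention.
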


\begin{proof}
      According to \eqref{eq:NSDDEsdiffform} and the It\^{o} formula (see, e.g., \cite[(3.9)]{luo2006newcriteria}), we have that for any $\varepsilon > 0$,
      \begin{align}\label{eq:stab_2}
                  &~ e^{\omega t}\E\big[|X(t) - D(X(t-\tau))
                  -
                  \bar{X}(t) + D(\bar{X}(t-\tau))|^2\big]
                  \notag
                  \\=&~
                  |\phi(0) - D(\phi(-\tau))
                  - \bar{\phi}(0) +  D(\bar{\phi}(-\tau))|^2
                  \notag
                  \\&~+
                  \omega \int_{0}^{t} e^{\omega s} \E\big[
                  | X(s) - D( X(s-\tau) )
                  -
                  \bar{X}(s) + D(\bar{X}(s-\tau))|^2\big] \diff{s}
                  \notag
                  \\&~+
                  \int_{0}^{t} e^{\omega s}
                  \E\big[\big( 2\big\langle
                  X(s) - D(X(s-\tau))
                  -
                  \bar{X}(s) + D( \bar{X}(s-\tau)),
                  \notag
                  \\&~b(s,X(s),X(s-\tau))
                  - b(s,\bar{X}(s),\bar{X}(s-\tau))
                  \big \rangle
                  \notag
                  \\&~+
                  |\sigma( s, X(s),X(s-\tau))
                  - \sigma(s,\bar{X}(s),\bar{X}(s-\tau))|^2\big] \big)\diff{s}
                  \notag
                  \\=:&~
                  |\phi(0) - D(\phi(-\tau))
                  - \bar{\phi}(0) +  D(\bar{\phi}(-\tau))|^2
                  + J_1 + J_2.
      \end{align}
      Noting that \cite[Lemma 4.1 in Chapter 6]{mao2008stochastic} shows that for any $a,b \in \R, \varepsilon > 0$ and $p > 1$,
      \begin{align*}
            |a+b|^{p}
            \leq
            \bigg(1 + \varepsilon^{\frac{1}{p-1}}\bigg)^{p-1}
            \bigg(|a|^{p} + \frac{|b|^{p}}{\varepsilon}\bigg),
      \end{align*}
      we take $p = 2, \varepsilon = \nu$ and use \eqref{eq:DCompression} to get
      \begin{align}\label{eq:stab_3}
                  J_1
                  \leq&~
                  \omega \int_{0}^{t} e^{\omega s}
                  \E\bigg[(1+\nu)\bigg(|X(s)-\bar{X}(s)|^{2}
                  \notag
                  \\&~+
                  \frac{|D(X(s-\tau))-D(\bar{X}(s-\tau))|^2}{\nu}
                  \bigg)\bigg] \diff{s}
                  \notag
                  \\\leq&~
                  \omega(1+\nu)\int_{0}^{t} e^{\omega s}
                  \E[|X(s)-\bar{X}(s)|^2]\diff{s}
                  \notag
                  \\&~+
                  \omega\nu(1+\nu) \int_{0}^{t} e^{\omega s}
                  \E[|X(s-\tau)-\bar{X}(s-\tau)|^2]\diff{s}.
      \end{align}
      Applying \eqref{ass:stab} shows
      \begin{align}\label{eq:stab_4}
                  J_2
                  \leq&~
                  \int_{0}^{t} e^{\omega s}
                  \big(-c_1\E[|X(s)-\bar{X}(s)|^2]
                  +
                  c_2 \E[|X(s-\tau)-\bar{X}(s-\tau)|^2]
                  \notag
                  \\&~-
                  c_3 \E[V(X(s),\bar{X}(s))]
                  +
                  c_4\E[V(X(s-\tau),\bar{X}(s-\tau))]\big)\diff{s}.
	  \end{align}
      It follows from \eqref{eq:stab_2}, \eqref{eq:stab_3} and  \eqref{eq:stab_4} that
      \begin{align}\label{eq:stab_5}
                  &~ e^{\omega t}\E\big[
                  | X(t) - D( X(t-\tau) )
                  -
                  \bar{X}(t) + D( \bar{X}(t-\tau) ) |^2 \big]
                  \notag
                  \\\leq&~
                  |\phi(0) - D(\phi(-\tau))
                  - \bar{\phi}(0) + D(\bar{\phi}(-\tau))|^2
                  \notag
                  \\&~+
                  (\omega(1+\nu)-c_1) \int_{0}^{t}
                  e^{\omega s} \E[|X(s)-\bar{X}(s)|^2] \diff{s}
                  \notag
                  \\&~+
                  (\omega\nu(1+\nu)+c_2) \int_{0}^{t} e^{\omega s}
                  \E[|X(s-\tau)-\bar{X}(s-\tau)|^2] \diff{s}
                  \notag
                  \\&~-
                  c_3 \int_{0}^{t} e^{\omega s}
                  \E[V(X(s),\bar{X}(s))] \diff{s}
                  +
                  c_4 \int_{0}^{t} e^{\omega s}
                  \E[V(X(s-\tau),\bar{X}(s-\tau))] \diff{s}.
      \end{align}
      Owing to
      \begin{align*}
                  &~\int_{0}^{t} e^{\omega s}
                  \E[|X(s-\tau)-\bar{X}(s-\tau)|^{2}] \diff{s}
					\\=&~
					e^{\omega\tau} \int_{-\tau}^{t-\tau} e^{\omega s}
                    \E[|X(s) - \bar{X}(s)|^{2}] \diff{s}
					\\\leq&~
					e^{\omega\tau} \int_{-\tau}^{0}
					|\phi(s) - \bar{\phi}(s) | ^ 2 \diff{s}
					+
					e^{\omega \tau} \int_{0}^{t} e^{\omega s}
                    \E[|X(s)-\bar{X}(s)|^{2}] \diff{s},
	  \end{align*}
      and similarly
	  \begin{align*}
				&~\int_{0}^{t} e^{\omega s}
                \E[V(X(s-\tau),\bar{X}(s-\tau))] \diff{s}
				\\\leq&~
				e^{\omega \tau} \int_{-\tau}^{0}
				V(\phi(s) , \bar{\phi}(s)) \diff{s}
				+
				e^{\omega \tau} \int_{0}^{t}
				e^{\omega s} \E[V(X(s),\bar{X}(s))] \diff{s},
	  \end{align*}
      we utilize \eqref{eq:stab_5} to show
      \begin{align}\label{eq:stab_8}
					&~ e^{\omega t}\E\big[|X(t)-D(X(t-\tau))
					-
					\bar{X}(t) + D(\bar{X}(t-\tau))|^2\big]\notag
					\\\leq&~
					|\phi(0) - D(\phi(-\tau)) - \bar{\phi}(0)
                    +  D(\bar{\phi}(-\tau)) | ^ 2
					+
					c_4 e^{\omega \tau} \int_{-\tau}^{0}
					V( \phi(s) , \bar{\phi}(s) ) \diff{s} \notag
					\\&~+
					(\omega\nu(1+\nu)+c_2)
					e^{\omega \tau} \int_{-\tau}^{0}
					| \phi(s) - \bar{\phi}(s) | ^ 2 \diff{s} \notag
					\\&~+
					H_1(\omega) \int_{0}^{t} e^{\omega s}
					\E[|X(s)-\bar{X}(s)|^{2}] \diff{s}
					+
					H_2(\omega)
					\int_{0}^{t} e^{\omega s}
                    \E[V(X(s),\bar{X}(s))] \diff{s}
      \end{align}
      with
      \begin{align}\label{H_1}
			H_1(\omega)
			:=
			\omega(1+\nu)-c_1  +
            \omega\nu(1+\nu) e^{\omega \tau}
            + c_2 e^{\omega \tau},
            \quad
            H_2(\omega)
            :=
            c_4 e^{\omega \tau} - c_3.
      \end{align}
      In view of $c_1 > c_2 >0$, we have $H_1(0) = -c_1+c_2 < 0$ and $ H_1(\frac{1}{\tau}\ln\frac{c_1}{c_2}) > -c_1 + c_2 e^{(\frac{1}{\tau}\ln\frac{c_1}{c_2}) \tau} = 0$. Together with the fact $H_1^{'}(\omega) > 0$ for all $\omega > 0$, we deduce that there exists a unique constant $\omega_1 \in (0,\frac{1}{\tau}\ln\frac{c_1}{c_2})$ such that $H_1(\omega_1) = 0$ and consequently
      \begin{align}\label{eq:411411}
            H_1(\omega) \leq 0,
            \quad \omega \in (0,\omega_1].
      \end{align}
      Concerning $H_{2}$, we have
      \begin{align}\label{eq:412412}
            H_2(\omega) < 0,
            \quad
            \omega > 0
      \end{align}
      when $c_3 > c_4 = 0$. For the case $c_3 > c_4 > 0$, we use $H_2(0) = -c_3+c_4 < 0$ and $H_2(\frac{1}{\tau}\ln \frac{c_3}{c_4}) = 0$ to see that $H_2(\omega) \leq 0$ for any $\omega \in (0,\frac{1}{\tau}\ln \frac{c_3}{c_4}]$, which along with \eqref{eq:411411} and \eqref{eq:412412} implies
      \begin{align}\label{eq:H1H2leqzero}
            H_1(\omega) \leq 0,
            \quad
            H_2(\omega) \leq 0,
            \quad
            \omega \in \Big(0,\omega_1
            \wedge \frac{1}{\tau}\ln
            \Big(\frac{c_3}{c_4}{\bf{1}}_{\{c_{4} > 0\}}\Big)\Big].
      \end{align}
      Inserting \eqref{eq:H1H2leqzero} into \eqref{eq:stab_8} and using $\omega_1 < \frac{1}{\tau}\ln\frac{c_1}{c_2}$ shows that for any $\omega \in \big(0,\omega_1 \wedge \frac{1}{\tau}\ln (\frac{c_3}{c_4} {\bf{1}}_{\{c_{4} > 0\}})\big]$,
      \begin{align}\label{eq:stab_9}
					&~ e^{\omega t}\E\big[|X(t)-D( X(t-\tau))
					- \bar{X}(t) + D( \bar{X}(t-\tau))|^2 \big]\notag
                    \\\leq&~
					|\phi(0) - D(\phi(-\tau)) - \bar{\phi}(0)
                    +  D(\bar{\phi}(-\tau)) | ^ 2
					+
					c_3 \int_{-\tau}^{0}
					V( \phi(s) , \bar{\phi}(s) ) \diff{s} \notag
					\\&~+
					\bigg(\frac{\nu(1+\nu)}{\tau}
                    \ln\frac{c_1}{c_2} + c_2\bigg)
					\frac{c_{1}}{c_{2}} \int_{-\tau}^{0}
					|\phi(s)-\bar{\phi}(s)|^{2} \diff{s}
                    =:\hat{C}.
      \end{align}
      By \eqref{eq:DCompression} and the weighted Young inequality $ 2ab\leq \varepsilon a^2 + \frac{b^2}{\varepsilon} $ for all $a,b \in \R$ with $\varepsilon = \frac{1-\nu}{\nu} > 0$, we obtain
      \begin{align*}
            |X(t)-\bar{X}(t)|^{2}
            =&~
            |X(t)- D(X(t-\tau)) - \bar{X}(t) + D(\bar{X}(t-\tau))|^{2}
            \\&~+
            2\langle X(t)- D(X(t-\tau)) - \bar{X}(t) + D(\bar{X}(t-\tau)),
            \\&~
            D(X(t-\tau)) - D(\bar{X}(t-\tau))\rangle
            +
            |D(X(t-\tau)) - D(\bar{X}(t-\tau))|^{2}
            \\\leq&~
            (1+\varepsilon^{-1})
            |X(t)- D(X(t-\tau)) - \bar{X}(t) + D(\bar{X}(t-\tau))|^{2}
            \\&~+
            (1+\varepsilon)
            |D(X(t-\tau)) - D(\bar{X}(t-\tau))|^{2}
            \\\leq&~
            \frac{|X(t) - D(X(t-\tau)) - \bar{X}(t)
            + D(\bar{X}(t-\tau))|^2}{1-\nu}
            \\&~+
            \nu|X(t-\tau)-\bar{X}(t-\tau)|^{2}.
      \end{align*}
      In combination with \eqref{eq:stab_9}, we have that for any $\omega \in \big(0,\omega_1 \wedge \frac{1}{\tau}\ln (\frac{c_3}{c_4} {\bf{1}}_{\{c_{4} > 0\}})\big]$,
      \begin{align*}
            e^{\omega t}\E\big[|X(t)-\bar{X}(t)|^{2}\big]
            \leq
            \frac{\hat{C}}{1-\nu}
            +
            \nu \E\big[e^{\omega t}|X(t-\tau)
            - \bar{X}(t-\tau)|^{2}\big],
      \end{align*}
      which indicates that for any $T > 0$,
      \begin{align}\label{eq:stab_12}
            \sup_{0 \leq t \leq T} e^{\omega t}
            \E\big[|X(t) - \bar{X}(t)|^{2}\big]
            \leq&~
            \frac{\hat{C}}{1 - \nu}
            +
            \nu e^{\omega \tau}\sup_{0 \leq t \leq T}
            e^{\omega (t-\tau)} \E\big[
            |X(t-\tau) - \bar{X}(t-\tau)|^{2}\big]
            \notag
            \\\leq&~
            \frac{\hat{C}}{1-\nu}
            +
            \nu e^{\omega \tau} \sup_{-\tau \leq t \leq 0}
            \E\big[|\phi(t)-\bar{\phi}(t)|^{2}\big]
            \notag
            \\&~+
            \nu e^{\omega \tau} \sup_{0\leq t\leq T}
            e^{\omega t} \E\big[|X(t)-\bar{X}(t)|^{2}\big].
      \end{align}
      Observing that for any $\omega_{2} \in (0,\frac{1}{\tau}\ln\frac{1}{\nu})$ and any
      $\omega \in \big(0,\omega_{1} \wedge \omega_{2}
      \wedge \frac{1}{\tau}\ln (\frac{c_3}{c_4} {\bf{1}}_{\{c_{4} > 0\}})\big)$, one can use $0 < \nu e^{\omega\tau} \leq
      \nu e^{\omega_{2}\tau} < 1$ and \eqref{eq:stab_12} to derive
      \begin{align*}
            \sup_{0 \leq t \leq T} e^{\omega t}
            \E\big[|X(t)-\bar{X}(t)|^2 \big]
            \leq
            \frac{1}{1-\nu e^{\omega_{2}\tau}}
            \bigg(\frac{\hat{C}}{1-\nu}
            +
            \sup_{-\tau \leq t \leq 0}
            \E\big[|\phi(t)-\bar{\phi}(t)|^{2}\big]\bigg).
      \end{align*}
      By taking the limit $T \to \infty$, we see that for any
      $\omega \in \big(0,\omega_{1} \wedge \omega_{2}
      \wedge \frac{1}{\tau}\ln (\frac{c_3}{c_4} {\bf{1}}_{\{c_{4} > 0\}})\big)$ and any $t \geq 0$,
      \begin{align*}
            \E\big[|X(t)-\bar{X}(t)|^2 \big]
            \leq
            \frac{e^{-\omega t}}{1-\nu e^{\omega_{2}\tau}}
            \bigg(\frac{\hat{C}}{1-\nu}
            +
            \sup_{-\tau \leq t \leq 0}
            \E\big[|\phi(t)-\bar{\phi}(t)|^{2}\big]\bigg),
      \end{align*}
      which immediately implies \eqref{eq:exactexponential} and completes the proof.
\end{proof}

Due to the fact that Assumption \ref{ass:stab} implies Assumption \ref{ass:mainasstwo}, repeating the proof of Lemma \ref{well-posedness-num}
shows that \eqref{eq:BEmethodintro} admits a unique solution $\{X_{n}\}_{n \geq -M}$ with probability one for any $\Delta > 0$. Now one can follow Definition \ref{def:expmsstab} to define the exponential stability in mean square sense for numerical solutions generated by a numerical method.

\begin{definition}
      The numerical solution generated by \eqref{eq:BEmethodintro} is said to be mean square exponentially stable if for any two solutions $\{X_n\}_{n \geq -M}$ and $\{\bar{X}_n\}_{n \geq -M}$ with different initial values $\phi,\bar{\phi} \in \C(\big[-\tau,0\big];\R^{d})$ respectively, there exists a constant $\omega > 0$ such that
      \begin{equation}
	        \varlimsup_{n \to \infty}\frac{1}{t_{n}}
            \ln\E\big[|X_n-\bar{X}_n|^2\big]
            \leq
            -\omega.
      \end{equation}
\end{definition}

We finally prove that the backward Euler method can inherit the mean square exponential stability of the original equations.

\begin{theorem}\label{th:bemsolution-stab}
      Suppose that Assumptions \ref{ass:mainassone} and \ref{ass:stab} hold and $\Delta \in (0,\log_{(\frac{c_3}{c_4}{\bf{1}}_{\{c_{4} > 0\}})^{\frac{1}{\tau}} \wedge (\frac{c_1}{c_2})^{\frac{1}{\tau}}}\zeta)$. Then for any two different initial values $\phi, \bar{\phi} \in \C(\big[-\tau,0\big];\R^{d})$ of \eqref{eq:NSDDEsdiffform}, the corresponding numerical solutions $\{X_n\}_{n \geq -M}$ and $\{\bar{X}_n\}_{n \geq -M}$ generated by  \eqref{eq:BEmethodintro} satisfy
      \begin{equation}\label{eq:numericalstability}
            \varlimsup_{n \to \infty} \frac{1}{t_{n}}
            \ln\E\big[|X_{n}-\bar{X}_n|^{2}\big]
            \leq
            -\omega
      \end{equation}
      for any $\omega \in \big(0,\bar{\omega} \wedge \ln \big(\big(\frac{c_3}{c_4}{\bf{1}}_{\{c_{4} > 0\}}\big)^{\frac{1}{\tau}} \wedge C^{*}\big)\big)$ with $\bar{\omega} \in (0,\frac{2}{\tau}\ln\frac{1}{\nu})$, where $C^{*} \in (1, (\frac{c_1}{c_2})^{\frac{1}{\tau}})$ is the unique root of
      \begin{align*}
            (1+\nu)\frac{C^{\Delta }-1}
            {\Delta}-c_1
			+
			(1+\nu)\nu
			\frac{C^{\Delta}-1}{\Delta}C^{\tau}
            +
            c_2 C^{\tau} = 0.
      \end{align*}
\end{theorem}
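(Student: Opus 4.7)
I would follow the template of Theorem \ref{eqsolution-stab}, replacing the It\^o formula by a discrete It\^o-type identity adapted to the backward Euler recursion \eqref{eq:BEmethodintro}, and replacing the exponential weight $e^{\omega t}$ by a discrete weight $C^{t_n}$ where $C>1$ will be determined by a characteristic equation obtained from the one-step inequality.

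First I would set $\tilde X_n := X_n - \bar X_n$, $Z_n := \tilde X_n - (D(X_{n-M}) - D(\bar X_{n-M}))$, and let $\Delta b_{n+1}$, $\Delta\sigma_n$ denote the one-step differences of the coefficients evaluated at the two trajectories. Subtracting \eqref{eq:BEmethodintro} for $X_n$ and $\bar X_n$ yields $Z_{n+1} = Z_n + \Delta b_{n+1}\Delta + \Delta\sigma_n\Delta W_n$. Applying the identity \eqref{eq:identity} with $u = Z_{n+1}$, $v = Z_n$, taking expectation, and using the $\F_{t_n}$-measurability of $Z_n$ and $\Delta\sigma_n$ to kill $\E[\langle Z_n,\Delta\sigma_n\Delta W_n\rangle] = 0$ would produce the one-step identity $\E[|Z_{n+1}|^2] - \E[|Z_n|^2] = 2\Delta\E[\langle Z_{n+1}, \Delta b_{n+1}\rangle] - \Delta^2\E[|\Delta b_{n+1}|^2] + \Delta\E[|\Delta\sigma_n|^2]$. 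Applying the generalized monotonicity \eqref{eq:genmonostability} at $t_{n+1}$ to the inner product and discarding the non-positive $-\Delta^2\E[|\Delta b_{n+1}|^2]$ term then gives a one-step inequality whose RHS contains $-\zeta\Delta\E[|\Delta\sigma_{n+1}|^2] + \Delta\E[|\Delta\sigma_n|^2]$, $-c_1\Delta\E[|\tilde X_{n+1}|^2] + c_2\Delta\E[|\tilde X_{n+1-M}|^2]$, and the Lyapunov difference $-c_3\Delta\E[V(X_{n+1},\bar X_{n+1})] + c_4\Delta\E[V(X_{n+1-M},\bar X_{n+1-M})]$.

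Next I would multiply by $C^{t_{n+1}}$, split $C^{t_{n+1}} = C^{\Delta}C^{t_n}$, and sum over $n = 0,\ldots,N-1$. Three partial telescopes emerge: (i) the $|Z|^2$ sum gives $C^{t_N}\E[|Z_N|^2] - \E[|Z_0|^2]$ on the LHS plus $(C^\Delta-1)\sum_n C^{t_n}\E[|Z_n|^2]$ on the RHS; (ii) the $|\Delta\sigma|^2$ contributions collapse to $\Delta(C^\Delta-\zeta)\sum_n C^{t_n}\E[|\Delta\sigma_n|^2]$ plus two boundary pieces, which is non-positive precisely when $C^{\Delta} \leq \zeta$ (this is the origin of the stepsize bound in the theorem); (iii) the delayed terms are re-indexed via $n+1-M \leftrightarrow k$, producing $c_2 C^\tau\Delta\sum_k C^{t_k}\E[|\tilde X_k|^2]$ and $c_4 C^\tau\Delta\sum_k C^{t_k}\E[V(X_k,\bar X_k)]$ modulo initial-segment contributions. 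Imposing $c_4 C^\tau \leq c_3$ (the $\ln(c_3/c_4)^{1/\tau}$ cap on $\omega$ when $c_4 > 0$) then renders the $V$-sum non-positive.

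Finally I would convert $\E[|\tilde X_n|^2]$ back to $\E[|Z_n|^2]$ via the weighted Young step \eqref{eq:e_young} with $\varepsilon = \nu/(1-\nu)$ and iterate as in \eqref{eq:e-interation} down to the initial segment, which converges under $\nu C^\tau < 1$ (guaranteed by the $\bar\omega$ bound). Substituting this into the summed inequality, the coefficient of $\Delta\sum_k C^{t_k}\E[|\tilde X_k|^2]$ on the RHS turns out to be exactly $(1+\nu)\tfrac{C^\Delta-1}{\Delta} - c_1 + (1+\nu)\nu\tfrac{C^\Delta-1}{\Delta}C^\tau + c_2 C^\tau$, i.e., the LHS of the equation defining $C^*$. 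A monotonicity-in-$C$ argument mirroring the analysis of $H_1$ in Theorem \ref{eqsolution-stab} (using $c_1 > c_2$) establishes existence and uniqueness of $C^* \in (1,(c_1/c_2)^{1/\tau})$ and the non-positivity of this coefficient for $C = e^\omega \leq C^*$, so the $\tilde X$-sum is absorbed into the LHS and one obtains $C^{t_N}\E[|Z_N|^2] \leq \hat C$ uniformly in $N$. A final Young-iteration step yields $\E[|\tilde X_N|^2] \leq \hat C' C^{-t_N}$, from which \eqref{eq:numericalstability} follows. The chief obstacle is the simultaneous enforcement of the three constraints on $C$---$C^\Delta \leq \zeta$ for the $\sigma$-telescope, $c_4 C^\tau \leq c_3$ for the $V$-telescope, and $\nu C^\tau < 1$ for the Young bootstrap---together with the bookkeeping of the initial-data boundary terms produced by each index shift, all of which are tuned precisely to fit into the admissible range for $\omega$ stated in the theorem.
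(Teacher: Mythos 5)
Your outline reproduces the paper's argument essentially step for step: the same difference recursion for $Z_n=\hat e_n-\Delta\hat D_n$, the same use of the identity \eqref{eq:identity} (your exact one-step identity with the discarded $-\Delta^2\E[|\Delta b_{n+1}|^2]$ term is a slightly cleaner route to the same inequality the paper gets via a Young estimate), the same discrete weight $C^{t_n}$ with telescoping, the same index shifts for the delayed terms, and the same three constraint functions $h_1(C)=C^{\Delta}-\zeta$, $h_2(C)=c_4C^{\tau}-c_3$ and $h_3(C)$ equal to the characteristic expression defining $C^{*}$. The existence/uniqueness argument for $C^{*}$ via monotonicity of $h_3$ and $c_1>c_2$ is exactly the paper's.

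There is, however, one concrete slip in your final bootstrap. You convert $\E[|\hat e_n|^2]$ back from $\E[|Z_n|^2]$ using the fixed-parameter Young step \eqref{eq:e_young} and the iteration \eqref{eq:e-interation}, whose geometric absorption requires $\nu e^{\omega\tau}<1$, and you assert this is ``guaranteed by the $\bar\omega$ bound.'' It is not: the theorem only imposes $\bar\omega<\frac{2}{\tau}\ln\frac{1}{\nu}$, which gives $\nu^{2}e^{\bar\omega\tau}<1$ but not $\nu e^{\bar\omega\tau}<1$ (take $\nu=\frac12$, $\tau=1$, $\bar\omega=1.3<2\ln 2$; then $\nu e^{\bar\omega\tau}\approx 1.83$). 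The paper avoids this by keeping the Young parameter $\epsilon$ free in the estimate $e^{\omega t_i}\E[|\hat e_i|^2]\leq(1+\epsilon)e^{\omega t_i}\E[|Z_i|^2]+(1+\epsilon^{-1})\nu^{2}e^{\omega\tau}\sup_j e^{\omega t_j}\E[|\hat e_j|^2]$ and choosing $\epsilon>\frac{\nu^{2}e^{\bar\omega\tau}}{1-\nu^{2}e^{\bar\omega\tau}}$, so that only $\nu^{2}e^{\bar\omega\tau}<1$ is needed in the sup-absorption. As written, your argument proves the stated conclusion only on the smaller range $\bar\omega\in(0,\frac{1}{\tau}\ln\frac{1}{\nu})$; to recover the full range you should replace the iteration of \eqref{eq:e-interation} by the free-parameter sup-absorption, exploiting $|\Delta\hat D_i|^2\leq\nu^{2}|\hat e_{i-M}|^2$ so that the decisive factor is $\nu^{2}$ rather than $\nu$.
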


\begin{proof}
      Since $\{X_n\}_{n \geq -M}$ and  $\{\bar{X}_n\}_{n \geq -M}$ are generated by \eqref{eq:BEmethodintro} with different initial values $\phi_n$ and $\bar{\phi}_n$ respectively, we have $X_{n} = \phi(t_{n}), \bar{X}_{n} = \bar{\phi}(t_{n})$ for $n = -M,-M+1,\cdots,0$ and
      \begin{align}\label{eq:BEstability-}
            &~X_{n} - D(X_{n-M}) - \bar{X}_{n} + D(\bar{X}_{n-M})
            \notag
            \\=&~
            X_{n-1}-D(X_{n-1-M})
            -
            \bar{X}_{n-1} + D(\bar{X}_{n-1-M})
            \notag
            \\&~+
            \big(b(t_{n},X_{n},X_{n-M})
            -
            b(t_{n},\bar{X}_{n},\bar{X}_{n-M})\big)\Delta
            \notag
            \\&~+
            \big(\sigma(t_{n-1},X_{n-1},X_{n-1-M})
            -
            \sigma(t_{n-1},\bar{X}_{n-1},\bar{X}_{n-1-M})\big)
            \Delta{W_{n-1}}, \quad n = 1,2,\cdots.
      \end{align}
      Adopting the notations $\e_{n}:=X_{n}-\bar{X}_{n}$, $\Delta\D_{n}:=D(X_{n-M})-D(\bar{X}_{n-M})$ and
      \begin{gather*}
            \Delta\bb_{n}:=b(t_{n},X_{n},X_{n-M})
            -
            b(t_{n},\bar{X}_{n},\bar{X}_{n-M}),
            \quad
            \Delta\si_{n}:=\sigma(t_{n},X_{n},X_{n-M})
            -
            \sigma(t_{n},\bar{X}_{n},\bar{X}_{n-M})
      \end{gather*}
      helps us to get
      \begin{equation}\label{key}
            \e_n - \Delta\D_{n}
            =
            \e_{n-1} - \Delta\D_{n-1} + \Delta\bb_{n} \Delta
            +
            \Delta\si_{n-1} \Delta W_{n-1},
            \quad n = 1,2,\cdots.
      \end{equation}
      It follows from \eqref{eq:identity} and $\E\big[\langle \e_{n-1}-\Delta\D_{n-1}, \Delta\si_{n-1}\Delta W_{n-1} \rangle\big] = 0$ that for any $n = 1,2,\cdots$,
      \begin{align}\label{stab_T}
            &~\E\big[|\e_n-\Delta\D_{n}|^2\big]
            -
            \E\big[|\e_{n-1}-\Delta\D_{n-1}|^2\big]
            +
            \E\big[|\e_n-\Delta\D_{n}
            -
            \e_{n-1}+\Delta\D_{n-1}|^2\big]
            \notag
            \\=&~
            2\Delta\E\big[\langle \e_n-\Delta\D_{n},
            \Delta\bb_{n} \rangle\big]
            +
            2\E\big[\langle \e_n-\Delta\D_{n},
            \Delta\si_{n-1}\Delta W_{n-1}\rangle\big]
            \notag
            \\=&~
            2\Delta\E\big[\langle \e_n-\Delta\D_{n},
            \Delta\bb_{n} \rangle\big]
            +
            2\E\big[\langle \Delta\si_{n-1}\Delta W_{n-1},
            \e_n - \Delta\D_{n} - \e_{n-1}
            + \Delta\D_{n-1} \rangle\big].
      \end{align}
      By Assumption \ref{ass:stab} and
      $\E\big[|\Delta \si_{n-1} \Delta W_{n-1}|^2\big]
      =\Delta\E\big[|\Delta\si_{n-1}|^2\big]$, we obtain
      \begin{align}
            &~\E\big[|\e_n-\Delta\D_{n}|^2\big]
            -
            \E\big[|\e_{n-1}-\Delta\D_{n-1}|^2\big]
            +
            \E\big[|\e_n-\Delta\D_{n}
            -
            \e_{n-1}+\Delta\D_{n-1}|^2\big]
            \notag
            \\\leq&~
            -c_1 \Delta\E\big[|\e_n|^2\big]
            +
            c_2 \Delta\E\big[|\e_{n-M}|^2\big]
            -
            c_3 \Delta\E\big[V(X_{n},\bar{X}_n)\big]
            \notag
            +
            c_4 \Delta\E\big[V(X_{n-M},\bar{X}_{n-M})\big]
            \\&~-
            \zeta\Delta\E\big[|\Delta\si_{n}|^2\big]
            +
            \Delta\E\big[|\Delta\si_{n-1}|^2\big]
            +
            \E\big[|\e_n-\Delta\D_{n}
            -
            \big(\e_{n-1}-\Delta\D_{n-1}\big)|^2\big],
            \notag
      \end{align}
      and consequently
      \begin{align}\label{eq:nstab_1}
            &~\E\big[|\e_n-\Delta\D_{n}|^2\big]
            +
            \zeta\Delta \E\big[|\Delta\si_{n}|^2\big]
            \notag
            \\\leq&~
            \E\big[|\e_{n-1}-\Delta\D_{n-1}|^2\big]
            +
            \Delta \E\big[|\Delta\si_{n-1}|^2\big]
            -
            c_1 \Delta\E\big[|\e_n|^2\big] \notag
            \\&~+
            c_2 \Delta \E\big[|\e_{n-M}|^2\big]
            -
            c_3 \Delta \E\big[V(X_{n},\bar{X}_n)\big]
            +
            c_4 \Delta \E\big[V(X_{n-M},\bar{X}_{n-M})\big].
      \end{align}
      For any $C \geq 1$, we multiply $C^{t_{n}}$ on the both sides of \eqref{eq:nstab_1} and subtract $C^{t_{n-1}} \E\big[|\e_{n-1}-\Delta\D_{n-1}|^2\big] + C^{t_{n-1}}\zeta\Delta \E\big[|\Delta\si_{n-1}|^2\big]$ to derive
      \begin{align*}
            &~ C^{t_{n}}\E\big[|\e_{n}-\Delta\D_{n}|^2\big]
            -
            C^{t_{n-1}}\E\big[|\e_{n-1}-\Delta\D_{n-1}|^2\big]
            +
            C^{t_{n}} \zeta\Delta
            \E\big[|\Delta\si_{n}|^2\big]
            -
            C^{t_{n-1}} \zeta\Delta
            \E\big[|\Delta\si_{n-1}|^2\big]
            \notag
            \\\leq&~
            (C^{\Delta}-1) C^{t_{n-1}}
            \E\big[|\e_{n-1}-\Delta\D_{n-1}|^2\big]
            +
            (C^{\Delta}-\zeta)\Delta C^{t_{n-1}}
            \E\big[|\Delta\si_{n-1}|^2\big] \notag
            -
            c_1 \Delta C^{t_{n}} \E\big[|\e_{n}|^2\big]
            \\&~+
            c_2 \Delta C^{t_{n}} \E\big[|\e_{n-M}|^2\big]
            -
            c_3 \Delta C^{t_{n}}
            \E \big[V( X_{n},\bar{X}_n)\big]
            +
            c_4 \Delta C^{t_{n}}
            \E\big[V(X_{n-M},\bar{X}_{n-M})\big].
      \end{align*}
      By iteration, one has
      \begin{align}\label{eq:nstab_8}
            &~ C^{t_{n}} \E\big[|\e_{n}-\Delta\D_{n}|^2\big]
            +
            C^{t_{n}} \zeta\Delta
            \E\big[|\Delta\si_{n}|^2\big]
            \notag
            \\\leq&~
            \E\big[|\e_{0}-\Delta\D_{0}|^2\big]
            +
            \zeta\Delta \E\big[|\Delta\si_{0}|^2\big]
            \notag
            +
            (C^{\Delta}-1)\sum_{i=0}^{n-1} C^{t_{i}}
            \E\big[|\e_{i}-\Delta\D_{i}|^2\big]
            \\&~+
            (C^{\Delta}-\zeta)\Delta\sum_{i=0}^{n-1}
            C^{t_{i}} \E\big[|\Delta\si_{i}|^2\big]
            -
            c_1 \Delta\sum_{i=1}^{n} C^{t_{i}}
            \E\big[|\e_{i}|^2\big]
            +
            c_2 \Delta\sum_{i=1}^{n} C^{t_{i}}
            \E\big[|\e_{i-M}|^2\big]
            \notag
            \\&~-
            c_3 \Delta\sum_{i=1}^{n} C^{t_{i}}
            \E\big[V( X_{i},\bar{X}_i)\big]
            +
            c_4 \Delta\sum_{i=1}^{n} C^{t_{i}}
            \E\big[V( X_{i-M},\bar{X}_{i-M} )\big].
      \end{align}
      Noting that \eqref{eq:DCompression} and the weighted Young inequality $2ab\leq\varepsilon a^2+\varepsilon^{-1}b^2$ for all $a,b \in \R$ with $\varepsilon=\nu$ imply
      \begin{align}
            \E\big[|\e_{i}-\Delta\D_{i}|^2\big]
            =&~
            \E[|\e_{i}|^2]
            +
            \E[|\Delta\D_{i}|^2]
            +
            2\E[\langle \e_{i},\Delta\D_{i}\rangle ]
            \notag
            \\\leq&~
            (1+\nu) \E[|\e_{i}|^2]
            +
            (1+\nu)\nu \E[|\e_{i-M}|^2],
            \notag
      \end{align}
      we combine this with \eqref{eq:nstab_8} to see that
      \begin{align}\label{eq:nstab_81}
            &~ C^{t_{n}} \E\big[|\e_{n}-\Delta\D_{n}|^2\big]
            +
			C^{t_{n}} \zeta\Delta
            \E \big[|\Delta\si_{n}|^2\big]
            \notag
			\\\leq&~
			\E\big[|\e_{0}-\Delta\D_{0}|^2\big]
			+
			\zeta\Delta \E\big[|\Delta\si_{0}|^2\big]
			+
			c_1 \Delta \E\big[|\e_{0}|^2\big]
            \notag
			\\&~+
			(C^{\Delta}-\zeta) \Delta\sum_{i=0}^{n-1}
            C^{t_{i}} \E\big[|\Delta\si_{i}|^2\big]
            \notag
			+
			\bigg((1+\nu)
			\frac{C^{\Delta}-1}{\Delta}-c_1\bigg)
			\Delta\sum_{i=0}^{n-1} C^{t_{i}}
			\E\big[|\e_{i}|^2\big]
            \notag
			\\&~+
			\bigg((1+\nu)\nu
            \frac{C^{\Delta} - 1}{\Delta}+c_2\bigg)
            \Delta\sum_{i=0}^{n} C^{t_{i}}
			\E\big[|\e_{i-M}|^2\big]
			+
			c_3 \Delta \E\big[V(X_{0},\bar{X}_0)\big]
            \notag
			\\&~-
			c_3 \Delta\sum_{i=0}^{n-1} C^{t_{i}}
			\E\big[V(X_{i},\bar{X}_i)\big]
			+
			c_4 \Delta\sum_{i=1}^{n} C^{t_{i}}
			\E\big[V(X_{i-M},\bar{X}_{i-M})\big].
      \end{align}
      Owing to
      \begin{align}\label{nstab_10}
            &~\sum_{i=0}^{n} C^{t_{i}}
			\E\big[|\e_{i-M}|^2\big]
			=
			\sum_{i=-M}^{n-M} C^{t_{i+M}}
			\E\big[|\e_{i}|^2\big]
            \notag
			\\=&~
			\sum_{i=-M}^{-1} C^{t_{i+M}}
			\E\big[|\e_{i}|^2\big]
			+
			\sum_{i=0}^{n-1} C^{t_{i+M}}
			\E\big[|\e_{i}|^2\big]
			-
			\sum_{i=n-M+1}^{n-1} C^{t_{i+M}}
			\E\big[|\e_{i}|^2\big]
            \notag
			\\\leq&~
			C^{\tau}\sum_{i=-M}^{-1} C^{t_{i}} |\e_{i}|^{2}
			+
			C^{\tau}\sum_{i=0}^{n-1} C^{t_{i}}
			\E\big[|\e_{i}|^2\big],
      \end{align}
      and
      \begin{align}\label{nstab_11}
            &~\sum_{i=1}^{n} C^{t_{i}}
			\E\big[V( X_{i-M},\bar{X}_{i-M})\big]
			=
			\sum_{i=-M+1}^{n-M} C^{t_{i+M}}
			\E\big[V( X_{i},\bar{X}_{i})\big]
            \notag
			\\=&~
			\sum_{i=-M+1}^{-1} C^{t_{i+M}}
			\E\big[V(X_{i},\bar{X}_{i})\big]
			+
			\sum_{i=0}^{n-1} C^{t_{i+M}}
			\E\big[V( X_{i},\bar{X}_{i})\big]
            \notag
			-
			\sum_{i=n-M+1}^{n-1} C^{t_{i+M}}
			\E\big[V( X_{i},\bar{X}_{i})\big]
            \notag
			\\\leq&~
			C^{\tau}\sum_{i=-M+1}^{-1} C^{t_{i}}
			V(X_{i},\bar{X}_{i})
			+
			C^{\tau}\sum_{i=0}^{n-1} C^{t_{i}}
			\E\big[V( X_{i},\bar{X}_{i})\big]
      \end{align}
      with the formal definition of summation $\sum\limits_{i = a}^{b} g(i) = 0$ for $b < a$ when $M = 1$, we utilize \eqref{eq:nstab_81} to obtain
      \begin{align}\label{eq:nstab_12}
            &~C^{t_{n}}\E\big[|\e_{n}-\Delta\D_{n}|^{2}\big]
			\leq
			C^{t_{n}}\E\big[|\e_{n}-\Delta\D_{n}|^{2}\big]
			+
            C^{t_{n}} \zeta\Delta
            \E\big[|\Delta\si_{n}|^{2}\big]
            \notag
            \\\leq&~
			\E\big[|\e_{0}-\Delta\D_{0}|^2\big]
			+
			\zeta\Delta \E\big[|\Delta\si_{0}|^2\big]
			+
			c_1 \Delta \E\big[|\e_{0}|^2\big]
			+
			c_3 \Delta \E\big[V(X_{0},\bar{X}_0)\big]
            \notag
			\\&~+
			\bigg((1+\nu)\nu
            \frac{C^{\Delta }-1}{\Delta} + c_2\bigg)
			\Delta C^{\tau} \sum_{i=-M}^{-1}
            C^{t_{i}} |\e_{i}|^{2} \notag
            \\&~+
            c_4 \Delta C^{\tau} \sum_{i=-M+1}^{-1}
            C^{t_{i}} V(X_{i},\bar{X}_{i})
            +
            h_1(C) \Delta\sum_{i=0}^{n-1} C^{t_{i}}\Delta
			\E\big[|\Delta\si_{i}|^2\big] \notag
            \\&~+
			h_2(C) \Delta\sum_{i=0}^{n-1} C^{t_{i}}
			\E\big[V(X_{i},\bar{X}_i)\big]
			+
			h_3(C) \Delta\sum_{i=0}^{n-1} C^{t_{i}}
            \E\big[|\e_{i}|^2\big]
      \end{align}
      with $h_1(C) := C^{\Delta}-\zeta$, $h_2(C) := c_4 C^{\tau} - c_3$ and
      \begin{align}\label{nstab_13}
			h_3(C)
			:=
            (1+\nu)\frac{C^{\Delta }-1}
            {\Delta}-c_1
			+
			(1+\nu)\nu
			\frac{C^{\Delta}-1}{\Delta}C^{\tau}
            +
            c_2 C^{\tau}.
      \end{align}
      For the case $c_3 > c_4 = 0$, we have
      \begin{align}\label{eq:h2C1}
            h_{2}(C) < 0, \quad C \geq 1.
      \end{align}
      For the case $c_3 > c_4 > 0$, we use $h_2(1) = -c_3+c_4 < 0$, $h_2((\frac{c_3}{c_4})^{\frac{1}{\tau}}) = 0$ and $h_{2}^{'}(C) > 0$ for all $C \geq 1$ to get $h_{2}(C) \leq 0$ for all $C \in \big[1, (\frac{c_3}{c_4})^{\frac{1}{\tau}}\big]$, which in combination with \eqref{eq:h2C1} promises
      \begin{align}\label{eq:h2C2}
            h_{2}(C) < 0,
            \quad C \in \bigg[1,
            \Big(\frac{c_3}{c_4}\Big)^{\frac{1}{\tau}}
            \textbf{1}_{\{c_{4} > 0\}}\bigg)
            =:
            [1,\hat{C}).
      \end{align}
      Observing $c_1 > c_2 > 0$, the facts $h_3(1) = -c_1+c_2 < 0$, $h_3((\frac{c_1}{c_2})^{\frac{1}{\tau}}) > -c_1 + c_2 ((\frac{c_1}{c_2})^{\frac{1}{\tau}})^{\tau}=0$ and $h_3^{'}(C) > 0$ for all $C \geq 1$ guarantee that there exists a unique constant $C^{*} \in (1, (\frac{c_1}{c_2})^{\frac{1}{\tau}})$ such that $h_3(C^{*})=0$ and therefore
      \begin{align}\label{eq:h3C}
            h_{3}(C) \leq 0,
            \quad C \in [1,C^{*}].
      \end{align}
      Due to $\Delta \in (0,\log_{(\frac{c_3}{c_4}{\bf{1}}_{\{c_{4} > 0\}})^{\frac{1}{\tau}} \wedge (\frac{c_1}{c_2})^{\frac{1}{\tau}}}\zeta)$, we use $\hat{C} \wedge C^{*} = \big(\frac{c_3}{c_4}{\bf{1}}_{\{c_{4} > 0\}}\big)^{\frac{1}{\tau}} \wedge C^{*}$ and take $C = e^{\omega} \in (1, \big(\frac{c_3}{c_4}{\bf{1}}_{\{c_{4} > 0\}}\big)^{\frac{1}{\tau}} \wedge C^{*})$ to derive
      \begin{align}\label{eq:h1C}
            h_{1}(e^{\omega})
            \leq
            \bigg(\Big(\frac{c_3}{c_4}{\bf{1}}_{\{c_{4} > 0\}}\Big)^{\frac{1}{\tau}} \wedge C^{*}\bigg)^{\log_{(\frac{c_3}{c_4}{\bf{1}}_{\{c_{4} > 0\}})^{\frac{1}{\tau}} \wedge (\frac{c_1}{c_2})^{\frac{1}{\tau}}}\zeta} - \zeta \leq 0
      \end{align}
      for all $\omega \in \big(0, \ln \big(\big(\frac{c_3}{c_4}{\bf{1}}_{\{c_{4} > 0\}}\big)^{\frac{1}{\tau}} \wedge C^{*}\big)\big)$
      It follows from \eqref{eq:nstab_12}, \eqref{eq:h2C1}, \eqref{eq:h2C2}, \eqref{eq:h3C} and \eqref{eq:h1C} that for any $n = 1,2,\cdots$,
      \begin{align*}
			e^{\omega t_{n}}
			\E\big[|\e_{n}-\Delta\D_{n}|^2\big]
			\leq&~
			\E\big[|\e_{0}-\Delta\D_{0}|^2\big]
			+
			\zeta\Delta \E\big[|\Delta\si_{0}|^2\big]
			+
			c_1 \Delta \E\big[|\e_{0}|^2\big]
            \notag
			\\&~+
			\bigg((1+\nu)\nu
			\frac{e^{\omega\Delta }-1}{\Delta}
            + c_2\bigg)
			e^{\omega \tau} \Delta \sum_{i=-M}^{-1}
            e^{\omega t_{i}} |\e_{i}|^{2}
            \notag
			\\&~+
			c_3 \Delta \E\big[V( X_{0},\bar{X}_0)\big]
            +
			c_4 e^{\omega \tau} \Delta
			\sum_{i=-M+1}^{-1} e^{\omega t_{i}}
			V(\phi_{i},\bar{\phi}_{i})
			=:
            C_{\phi,\bar{\phi}}.
      \end{align*}
      Together with  \eqref{eq:DCompression} and the weighted Young inequality $2ab \leq \epsilon a^2 + \epsilon^{-1}b^2$ for all $a,b \in \R$ with $\epsilon > 0$, we show that for all $\omega \in \big(0, \ln \big(\big(\frac{c_3}{c_4}{\bf{1}}_{\{c_{4} > 0\}}\big)^{\frac{1}{\tau}} \wedge C^{*}\big)\big)$ and all $i = 0,1,\cdots,n$,
      \begin{align}
			e^{\omega t_{i}} \E\big[|\e_{i}|^2\big]
			\leq&~
			(1+\epsilon) e^{\omega t_{i}}
			\E\big[|\e_{i}-\Delta\D_{i}|^2\big]
			+
			(1+\epsilon^{-1})\nu^2 e^{\omega t_{i}}
			\E\big[|\e_{i-M}|^2\big]
            \notag
			\\\leq&~
			(1+\epsilon) C_{\phi,\bar{\phi}}
			+
			(1+\epsilon^{-1})\nu^2 e^{\omega \tau}
			\sup_{-M\leq j \leq n} e^{\omega t_{j}}
			\E\big[|\e_{j}|^2\big], \notag
      \end{align}
      which also holds for all $i = -M,-M+1,\cdots,0$ and therefore implies
      \begin{align*}
			\sup_{-M\leq j \leq n} e^{\omega t_{j}}
			\E\big[|\e_{j}|^2\big]
			\leq
			(1+\epsilon) C_{\phi,\bar{\phi}}
			+
			(1+\epsilon^{-1})\nu^2 e^{\omega \tau}
			\sup_{-M\leq j \leq n} e^{\omega t_{j}}
			\E\big[|\e_{j}|^2\big].
      \end{align*}
      For any $\bar{\omega} \in (0,\frac{2}{\tau}\ln\frac{1}{\nu})$ and any $\omega \in \big(0,\bar{\omega} \wedge \ln \big(\big(\frac{c_3}{c_4}{\bf{1}}_{\{c_{4} > 0\}}\big)^{\frac{1}{\tau}} \wedge C^{*}\big)\big)$, we choose any $\epsilon > \frac{\nu^{2}e^{\bar{\omega}\tau}}{ 1-\nu^{2}e^{\bar{\omega}\tau}}$ to get $0 < (1+\epsilon^{-1})\nu^2 e^{\omega \tau} \leq (1+\epsilon^{-1})\nu^2 e^{\bar{\omega}\tau} < 1$ and
      \begin{align*}
            \sup_{-M\leq j \leq n}e^{\omega t_{j}}
			\E\big[|\e_{j}|^2\big]
			\leq
			\frac{(1+\epsilon)C_{\phi,\bar{\phi}}}
            {1 - (1+\epsilon^{-1})\nu^2 e^{\bar{\omega}\tau}}.
      \end{align*}
      Letting $n \to \infty$ indicates that for any $\omega \in \big(0,\bar{\omega} \wedge \ln \big(\big(\frac{c_3}{c_4}{\bf{1}}_{\{c_{4} > 0\}}\big)^{\frac{1}{\tau}} \wedge C^{*}\big)\big)$ and any $n = 1, 2,\cdots$,
      \begin{align*}
			e^{\omega t_{n}}
			\E\big[|\e_{n}|^2\big]
			\leq
			\frac{(1+\epsilon)C_{\phi,\bar{\phi}}}
            {1 - (1+\epsilon^{-1})\nu^2 e^{\bar{\omega}\tau}},
      \end{align*}
      which results in \eqref{eq:numericalstability} and finishes the proof.
\end{proof}

\section{Numerical experiments}\label{section5}

This section aims to perform some numerical experiments to illustrate the previous theoretical results, including the mean square convergence rate and the mean square exponential stability. Before doing this, we emphasize that in what follows the Newton--Raphson iterations with precision $10^{-5}$ are employed to solve the nonlinear equations arising from the implementation of the considered implicit method in every time step. Besides, the expectations are approximated by the Monte Carlo simulations with $1000$ different Brownian paths.


      Let us first consider the scalar NSDDE
      \begin{align}\label{eq:1DNSDDE}
            \diff{\bigg(X(t) + \frac{1}{2}X(t-\tau)\bigg)}
            =&~
            \bigg(\frac{1}{1+t} + X(t) - 5X^{3}(t)
             + 2\sin(X(t-\tau)) \bigg)\diff{t}
             \notag
             \\&~+
             \big(1 + \cos t + X^{2}(t)
             +|X(t-\tau)|\big) \diff{W(t)},
             \quad t\in(0,5]
      \end{align}
      with the initial value $\phi(t) = \cos t, t \in [-1,0]$. Here $\{W(t)\}_{t \in [0,5]}$ is a standard $\R$-valued Brownian motion on the complete filtered probability space $(\Omega,\F,\{\F_{t}\}_{t \in [0,5]},\P)$. In order to detect the mean square convergence rate of backward Euler method, the unavailable exact solution is identified with a numerical approximation generated by \eqref{eq:BEmethodintro} with a fine stepsize $\Delta = 2^{-14}$. The other numerical approximations are calculated by \eqref{eq:BEmethodintro} applied to \eqref{eq:1DNSDDE} with six different stepsizes $\Delta = 2^{-i}, i = 7,8,9,10,11,12$. From the left part of Figure \ref{fig:strongorder}, we see that the root mean square error line and the reference line appear to parallel to each other, showing that the mean square convergence rate of the backward Euler method is order $1/2$. A least square fit indicates that the slope of the line for the backward Euler method is $0.55$, identifying Theorem \ref{th:mul-noise-rate} numerically.

\begin{figure}[!htbp]
\begin{center}
      \subfigure[order simulation for \eqref{eq:1DNSDDE}]
      {\includegraphics[width=0.45\textwidth]{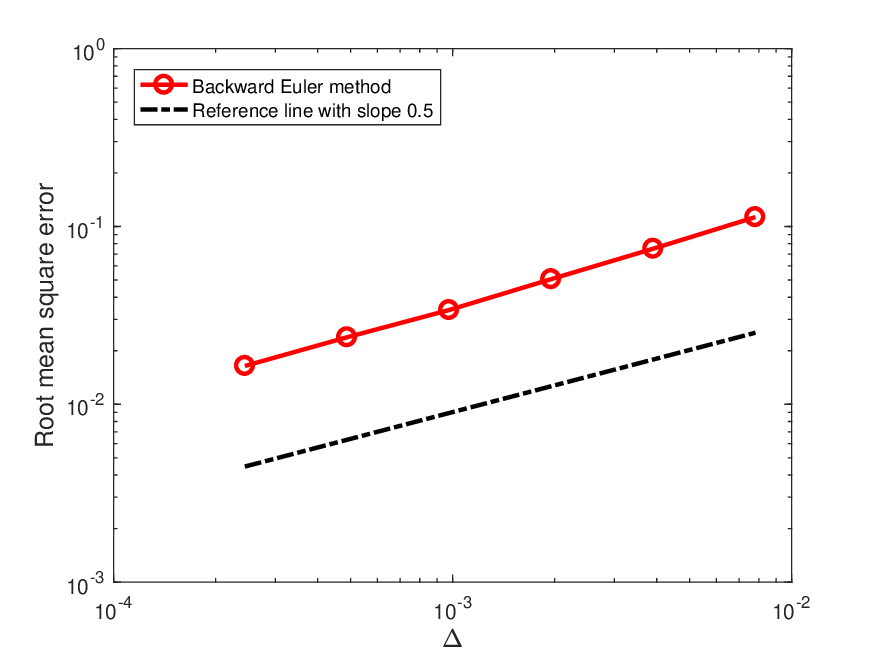}}
      \subfigure[order simulation for \eqref{eq:2DNSDDE}]
      {\includegraphics[width=0.45\textwidth]{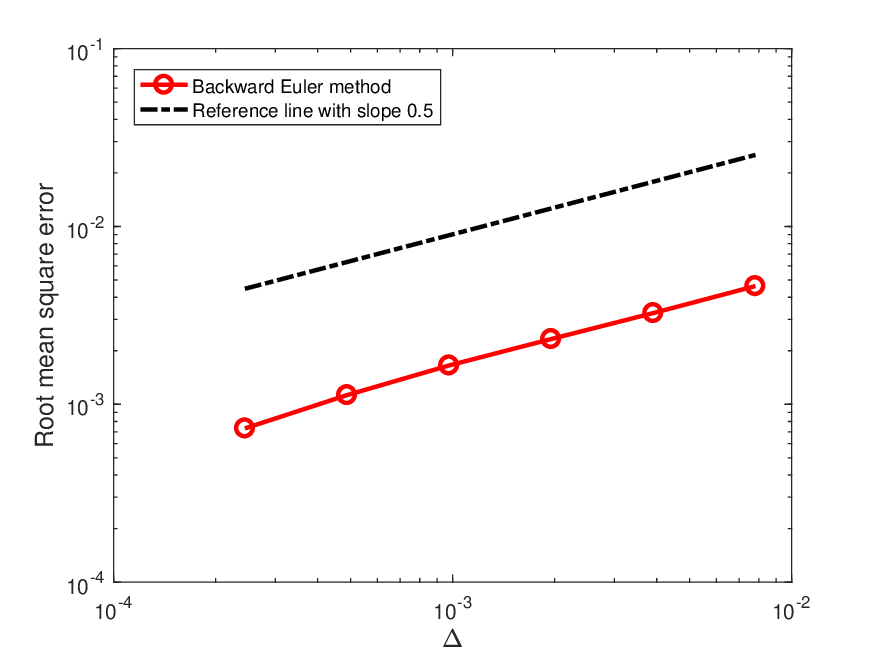}}
      \centering\caption{Mean square convergence rates of
      backward Euler method}
      \label{fig:strongorder}
\end{center}
\end{figure}

      We next focus on the following two-dimensional NSDDE
            \begin{align}\label{eq:2DNSDDE}
                  &~\diff{\left(
                  \begin{array}{cc}
                        \left[\begin{array}{cc}
                              X_1(t) \\ X_2(t)
                        \end{array}\right]
                        -
                        \frac{1}{2}\left[
                        \begin{array}{cc}
                              X_1(t-1) \\ X_2(t-1)
                        \end{array}\right]
                  \end{array}\right)} \notag
                  \\=&~\left(
                  \begin{array}{cc}
                        -\left(
                        \begin{array}{cc}
                              \left[\begin{array}{cc}
                                    4 & 0 \\ 0 & 4
                              \end{array}\right]
                              +
                              \left[\begin{array}{cc}
                                    t & 0 \\ 0 & t
                              \end{array}\right]
                        \end{array}\right)
                        \left[\begin{array}{cc}
                              X_1(t) \\ X_2(t)
                        \end{array}\right]
                        +
                        \left[\begin{array}{cc}
                              - 4X_1^3(t) \\ - 4X_2^3(t)
                        \end{array}\right]
                  \end{array}\right) \diff{t} \notag
                  \\&~+
                  \left[\begin{array}{cc}
                        X_1^2(t)+X_1(t-1) & 0 \\
                        0 & X_2^2(t)+X_2(t-1)
                  \end{array}\right]\diff{
                  \left[\begin{array}{cc}
                              W_{1}(t) \\ W_2(t)
                  \end{array}\right]},
                  \quad t \in (0,5]
            \end{align}
      with the initial value $(\phi_1(t),\phi_2(t))^{\top} = (\cos t,\sin t)^{\top}$ for $t \in [-1,0]$. Here $\{W_{1}(t)\}_{t \in [0,5]}$ and $\{W_{2}(t)\}_{t \in [0,5]}$ are two independent standard $\R$-valued Brownian motions on the complete filtered probability space $(\Omega,\F,\{\F_{t}\}_{t \in [0,5]},\P)$. It is worth noting that \eqref{eq:2DNSDDE} is a concrete example of the neutral stochastic cellular neural networks \cite{guo2013fixed}, which have been widely used in the fields of signal processing, pattern recognition, combinatorial optimization and so on. Under the same numerical setting of stepsizes for \eqref{eq:1DNSDDE}, the right part of Figure \ref{fig:strongorder} shows that the slopes of the error line and the reference line match well, indicating that the backward Euler method has the mean square convergence rate of order $1/2$.
      Additionally, a least square fit produces rate $0.52$. These facts further support the previous theoretical result in Theorem \ref{th:mul-noise-rate}.


\begin{figure}[!htbp]
      \centering
      \includegraphics[width=0.5\textwidth]{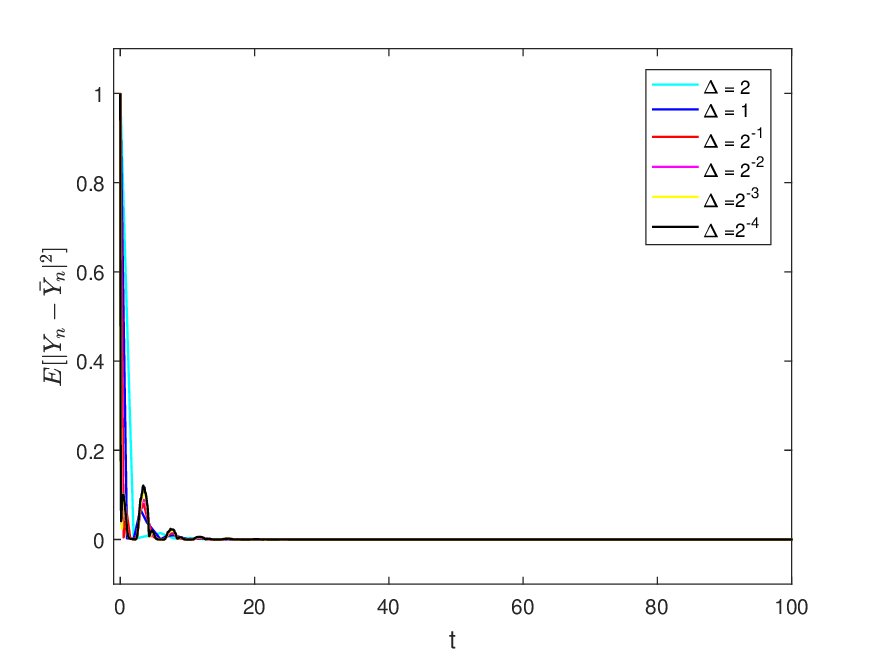}
      \caption{Mean square exponential stability of backward Euler method for \eqref{eq:examplestability}}
      \label{fig:NSDDE-stability}
\end{figure}

In order to test the exponential stability of numerical method, we modify Example 6.1 in \cite{deng2021tamed} as follows
      \begin{align}\label{eq:examplestability}
            &~\diff{\bigg(X(t)+\frac{1}{2}X(t-\tau)\bigg)} \notag
            \\=&~
            \bigg(-9X(t) + \frac{11}{2}X(t-\tau)
            -11\Big(X(t)+\frac{1}{2}X(t-\tau)\Big)
            \Big|X(t)+\frac{1}{2}X(t-\tau)\Big|
            \bigg)\diff{t} \notag
            \\&~+
            \bigg(X(t) + \sin(X(t-\tau))
            +
            \Big|X(t)+\frac{1}{2}X(t-\tau)
            \Big|^{\frac{3}{2}}\bigg)\diff{W(t)},
            \quad t > 0
      \end{align}
with two different initial values $\phi(t) = \cos t$ and $\bar{\phi}(t) = \sin t$ for $t \in [-4,0]$. Here $\{W(t)\}_{t\geq0}$ is a standard $\R$-valued Brownian motion on the complete filtered probability space $(\Omega,\F,\{\F_{t}\}_{t\geq0},\P)$. Noting that \eqref{eq:examplestability} tells $D(y) = -\frac{1}{2}y$,
\begin{gather*}
      b(t,x,y)
      =
      -9x + \frac{11}{2}y
      -
      11\Big(x+\frac{1}{2}y\Big)
      \Big|x+\frac{1}{2}y\Big|,
      \quad
      \sigma(t,x,y)
      =
      x + \sin y + \Big|x+\frac{1}{2}y\Big|^{\frac{3}{2}}
\end{gather*}
for all $t \geq 0$ and $x,y \in \R$ and we have $\nu = 1/2$. For any $x_{i}, y_{i} \in \R, i = 1,2$, we denote $Z_{i} := x_{i} + \frac{1}{2}y_{i}, i = 1,2$ to get
\begin{align}\label{eq:stablity-assu1}
      &~|b(t,x_{1},y_{1}) - b(t,x_{2},y_{2})|
      +
      |\sigma(t,x_{1},y_{1}) - \sigma(t,x_{2},y_{2})|\notag
      \\\leq&~
      20|x_{1}-x_{2}| + 11|Z_{1}-Z_{2}|
      + 11\big|Z_{1}|Z_{1}|-Z_{2}|Z_{2}|\big|\notag
      \\&~+
      |x_{1}-x_{2}| + |\sin y_{1} - \sin y_{2}|
      +
      \big||Z_{1}|^{\frac{3}{2}}
       - |Z_{2}|^{\frac{3}{2}}\big|\notag
      \\\leq&~
      21|x_{1}-x_{2}| + 11|Z_{1}-Z_{2}|
      + 11\big(|Z_{1}| + |Z_{2}|\big)|Z_{1}-Z_{2}|\notag
      \\&~+
       |y_{1}-y_{2}|
      +
      \big(|Z_{1}|^{\frac{1}{2}}
      + |Z_{2}|^{\frac{1}{2}}\big)
      |Z_{1} - Z_{2}|\notag
      \\\leq&~
      K(1 + |x_{1}| + |x_{2}| + |y_{1}| + |y_{2}|)
      (|x_{1}-x_{2}| + |y_{1}-y_{2}|),
\end{align}
where we have used the inequalities $\big|x|x|-y|y|\big| \leq (|x|+|y|)|x-y|$ and
\begin{align}\label{eq:55}
      \Big||x|^{\frac{3}{2}}-|y|^{\frac{3}{2}}\Big|
      \leq
      \big(|x|^{\frac{1}{2}}+|y|^{\frac{1}{2}}\big)
      \big||x|-|y|\big|
      \leq
      \big(|x|^{\frac{1}{2}}+|y|^{\frac{1}{2}}\big)
      \big|x-y\big|
\end{align}
for all $x,y \in \R$; see Appendix in \cite{sabanis2013euler}. It follows that \eqref{eq:bpolynomialgrow} holds with $q = 2$ and thus $p^{*} \geq 4q-2 = 6$. Taking $p^{*} = 6$ leads to
\begin{align}\label{eq:stablity-assu2}
      &~(1+|x|^{2})^{\frac{p^{*}}{2}-1}
      \big(2\langle x-D(y),b(t,x,y) \rangle
	  +
	  (p^*-1)|\sigma(t,x,y)|^{2}\big)\notag
      \\=&~
      (1+|x|^{2})^{2}
      \bigg(2\Big( x+\frac{1}{2}y\Big)
      \Big(-9x + \frac{11}{2}y\Big)
      -
      22\Big|x+\frac{1}{2}y\Big|^{3}\notag
	  +
	  5\bigg|x + \sin y + \Big|x +
      \frac{1}{2}y\Big|^{\frac{3}{2}}\bigg|^{2}\bigg)\notag
      \\\leq&~
      (1+|x|^{2})^{2}
      \bigg(-18x^{2} + \frac{13}{2}xy
      + \frac{11}{4}y^{2}
      -
      12\Big|x+\frac{1}{2}y\Big|^{3}
	  +
	  10|x + \sin y|^{2}\bigg)\notag
      \\\leq&~
      (1+|x|^{2})^{2}
      \bigg(-18x^{2} + \frac{13}{4}x^{2}
      + \frac{13}{4}y^{2} + \frac{11}{4}y^{2}
      -
      12\Big|x+\frac{1}{2}y\Big|^{3}
	  +
	  20|x|^{2} + 20|y|^{2}\bigg)\notag
      \\\leq&~
      (1+|x|^{2})^{2}
      \bigg(6x^{2} + 26|y|^{2}
      -
      12\Big|x+\frac{1}{2}y\Big|^{3}
	  \bigg)\notag
      \\\leq&~
	  K(1+|x|^{6}+|y|^{6})
\end{align}
for all $x, y \in \R$ with some constant $K > 0$, which is due to
\begin{align*}
      (1+|x|^{2})^{2}|x|^{2}
      \leq
      2|x|^{2} + 2|x|^{6}
      \leq
      2\bigg(\frac{(|x|^{2})^{\frac{6}{2}}}{\frac{6}{2}}
      + \frac{1^{\frac{6}{4}}}{\frac{6}{4}}\bigg) + 2|x|^{6}
      \leq
      3|x|^{6} + 2,
\end{align*}
and
\begin{align*}
      (1+|x|^{2})^{2}|y|^{2}
      \leq
      2|y|^{2} + |x|^{4}|y|^{2}
      \leq
      2\bigg(\frac{(|y|^{2})^{\frac{6}{2}}}{\frac{6}{2}}
      + \frac{1^{\frac{6}{4}}}{\frac{6}{4}}\bigg)
      +
      \frac{(|x|^{4})^{\frac{6}{4}}}{\frac{6}{4}}
      +
      \frac{(|y|^{2})^{\frac{6}{2}}}{\frac{6}{2}}
      \leq
      |x|^{6} + |y|^{6} +2.
\end{align*}
Then \eqref{eq:stablity-assu1} and \eqref{eq:stablity-assu2} support Assumption \ref{ass:mainassone}. Concerning Assumption \ref{ass:stab}, one can use $Z_{i} := x_{i} + \frac{1}{2}y_{i}, x_{i}, y_{i} \in \R, i = 1,2$ to show
\begin{align*}
      &~2\langle  (x_{1}-x_{2})-(D(y_{1})-D(y_{2})),
	  b(t,x_{1},y_{1})-b(t,x_{2},y_{2}) \rangle
	  \\&~+
	  \zeta |\sigma(t,x_{1},y_{1})
      -\sigma(t,x_{2},y_{2})|^{2}
     \\=&~
     2\bigg\langle  (x_{1}-x_{2})+\frac{1}{2}(y_{1}-y_{2}),
	 -20(x_{1}-x_{2}) \bigg\rangle
     \\&~+
     2\big\langle  Z_{1}-Z_{2},
	 \big( 11Z_{1} - 11Z_{1}|Z_{1}|\big)
      - \big( 11Z_{2} - 11Z_{2}|Z_{2}|\big) \big\rangle
	 \\&~+
	 \zeta
     |(x_{1} - x_{2}) + (\sin y_{1} - \sin y_{2})
      + (|Z_{1}|^{\frac{3}{2}}
       - |Z_{2}|^{\frac{3}{2}})|^{2}
     \\\leq&~
     -40|x_{1}-x_{2}|^{2}
	 -20(x_{1}-x_{2})(y_{1}-y_{2})
     +
     22|Z_{1}-Z_{2}|^{2}
     \\&~-
     22\big\langle  Z_{1}-Z_{2},
      Z_{1}|Z_{1}| - Z_{2}|Z_{2}| \big\rangle
	 +
     3\zeta|x_{1} - x_{2}|^{2}
     \\&~+
     3\zeta|y_{1} - y_{2}|^{2}
     +
     3\zeta\Big||Z_{1}|^{\frac{3}{2}}
       - |Z_{2}|^{\frac{3}{2}})\Big|^{2}
\end{align*}
Noting that $|Z_{1}-Z_{2}|^{2} = |x_{1}-x_{2}|^{2} + (x_{1}-x_{2})(y_{1}-y_{2}) + \frac{1}{4}|y_{1}-y_{2}|^{2}$,
we take $\zeta = \frac{5}{4}$ and utilize \eqref{eq:55} as well as
\begin{align*}
      -\big\langle  Z_{1}-Z_{2},
      Z_{1}|Z_{1}| - Z_{2}|Z_{2}| \big\rangle
      \leq
      -\big(|Z_{1}| + |Z_{2}|\big)
      \big(|Z_{1}| - |Z_{2}|\big)^{2}
\end{align*}
to derive that for all $x_{1},x_{2},y_{1},y_{2} \in \R$,
\begin{align*}
      &~2\langle  (x_{1}-x_{2})-(D(y_{1})-D(y_{2})),
	  b(t,x_{1},y_{1})-b(t,x_{2},y_{2}) \rangle
	  +
	  \zeta
     |\sigma(t,x_{1},y_{1})
     -\sigma(t,x_{2},y_{2})|^{2}
     \\\leq&~
     -17|x_{1}-x_{2}|^{2}
     +
     \frac{26}{4}|y_{1}-y_{2}|^{2}
	 +
	 3\zeta|x_{1} - x_{2}|^{2}
     +
     3\zeta|y_{1} - y_{2}|^{2}
     \\&~
     -22\big(|Z_{1}| + |Z_{2}|\big)
     \big(|Z_{1}| - |Z_{2}|\big)^{2}
     +6\zeta(|Z_{1}| + |Z_{2}|)
     \big(|Z_{1}|-|Z_{2}|\big)^{2}
     \\\leq&~
     -13|x_{1}-x_{2}|^{2}
     +
     \frac{41}{4}|y_{1}-y_{2}|^{2}
     -
     \frac{1}{4}|x_{1}-x_{2}|^{2},
\end{align*}
i.e., $c_1=13,~c_2=10.25,~c_3=0.25,~c_4=0$.
Theorem \ref{th:bemsolution-stab} shows that the backward Euler method for \eqref{eq:examplestability} is mean square exponentially stable for any $\Delta < \log_{(\frac{c_1}{c_2})^{\frac{1}{\tau}}} \zeta \approx 3.75$, which has been numerically confirmed with six different stepsizes
in Figure \ref{fig:NSDDE-stability}.

Finally,
we consider the following two-dimensional nonlinear NSDDEs
\begin{align}\label{eq:examplestiff}
      &~\diff{(X(t)-D(X(t-\tau)))}
      +
      AX(t)\diff{t}\notag
	  \\=&~
	  b(X(t),X(t-\tau))\diff{t}
	  +
	  \sigma(X(t),X(t-\tau))\diff{W(t)},
	  \quad t \in (0,1]
\end{align}
with the initial value $X(t) = \phi(t) = (\cos t,\sin t)^{\top}$ for $t \in [-0.25,0]$, where the coefficient $D \colon \R^{2} \to \R^{2}$ and the symmetric positive definite matrix $A$ are
\begin{equation*}
      D(y)
      =
      \left[
      \begin{array}{c}
            -0.2(\sin y_{1})^{2} \\
            -0.2(\sin y_{2})^{2} \\
      \end{array}
      \right],
      \quad
      A=
      \frac{1}{2}
      \left[
      \begin{array}{cc}
            1+\alpha & 1-\alpha \\
            1-\alpha & 1+\alpha \\
      \end{array}
      \right]
\end{equation*}
for $y = (y_{1},y_{2})^{\top}$ and $\alpha \gg 0$. Besides, the functions $b \colon \R^{2} \times \R^{2} \to \R^{2}$ and $\sigma \colon \R^{2}\times\R^{2} \to \R^{2 \times 2}$ are given by
\begin{equation*}
      ~
      b(x,y)
      =
      \left[
      \begin{array}{c}
            x_{1} - 0.6\big(x_{1} + 0.2(\sin y_{1})^{2}\big)
            |x_{1} + 0.2(\sin y_{1})^{2}|
            \\
            x_{2} - 0.6\big(x_{2} + 0.2(\sin y_{2})^{2}\big)
            |x_{2} + 0.2(\sin y_{2})^{2}|
            \\
      \end{array}
      \right],
\end{equation*}
\begin{equation*}
      \sigma(x,y)
      =
      \beta
      \left[
      \begin{array}{cc}
            x_{1}+y_{1} & 0 \\
            0 & x_{2}+y_{2} \\
      \end{array}
      \right],
      \quad
      x
      =
      \left[
      \begin{array}{c}
            x_{1} \\
            x_{2} \\
      \end{array}
      \right],
      \quad
      y
      =
      \left[
      \begin{array}{c}
            y_{1} \\
            y_{2} \\
      \end{array}
      \right],
      \quad
      \beta \geq 0.
\end{equation*}
Taking $\alpha = 96$, $\beta = 0.04$ shows that the matrix $A$ admits a very large eigenvalue $\lambda = 96$, which leads to \eqref{eq:examplestiff} being a stiff system \cite{andersson2017meansquare}.

\begin{figure}[!htbp]
      \centering
      \includegraphics[width=0.5\textwidth]{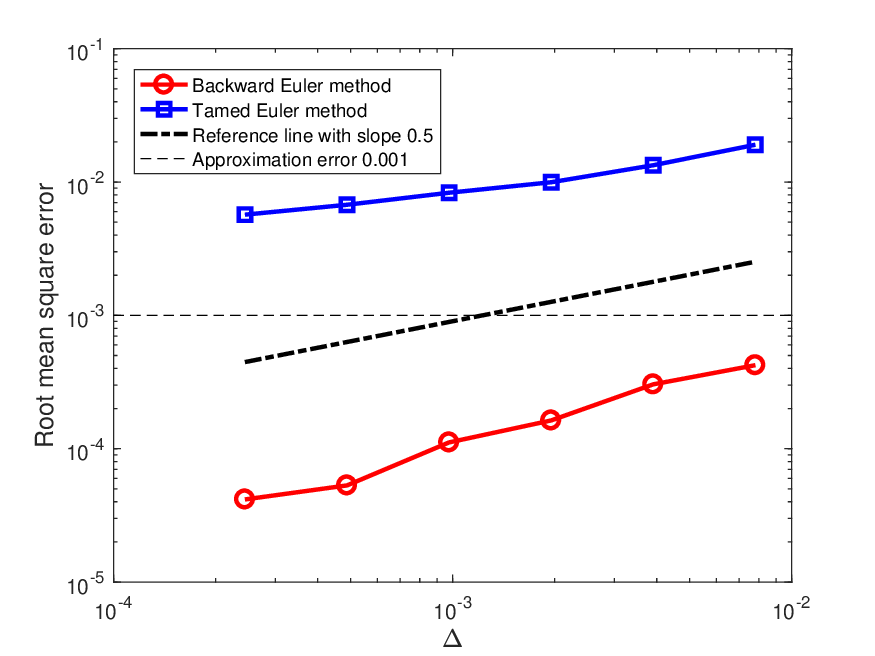}
      \caption{Convergence order simulations for \eqref{eq:examplestiff}}
      \label{fig:stiff}
\end{figure}

Recall that the tamed EM method proposed in \cite{deng2021tamed} applied to \eqref{eq:examplestiff} reads as $X_{n} = \phi(t_{n})$ for $n = -M, -M+1, \cdots, 0$ and
\begin{align}\label{eq:two_nonlinear_stiff_method}
      X_{n+1}
      =
      D(X_{n+1-M}) + X_{n} - D(X_{n-M})
      +
      \tfrac{(b(X_{n},X_{n-M})-AX_{n})\Delta}
      {1+\sqrt{\Delta}(|X_{n}|+|X_{n-M}|)}
      +
      \tfrac{\sigma(X_{n},X_{n-M})\Delta{W_n}}
      {1+\sqrt{\Delta}(|X_{n}|+|X_{n-M}|)}
\end{align}
for $n = 0, 1,\cdots,N$. Stepsize $\Delta = 2^{-16}$ and six different stepsizes $\Delta = 2^{-i}, i = 7,8,9,10,11,12$ are used to generated the unavailable exact solution and the other numerical approximations via \eqref{eq:BEmethodintro} and \eqref{eq:two_nonlinear_stiff_method}, respectively. Figure \ref{fig:stiff} shows that the backward Euler method achieves the precision $\epsilon = 0.001$ with the strong approximation problem $\big(\E[|X(T)-X_N|^{2}]\big)^{\frac{1}{2}} \leq \epsilon$ for all given stepsizes, but the tamed Euler method does not. Thus, the considered backward Euler method performs better than the explicit tamed EM method in terms of dealing with stiff NSDDEs.

\section{Conclusion}
This work investigates the convergence rate and exponential stability of the backward Euler method for NSDDEs as well as SDDEs under generalized monotonicity conditions. Our main results show that the investigated method converges strongly in the mean square sense with order $1/2$ and preserves the mean square exponential stability of the original equations. These theoretical results are finally validated by some numerical experiments. Concerning that strong convergence rates for numerical approximations are particular important to design efficient multilevel Monte Carlo approximation methods \cite{giles2008montecarlo}, there are two interesting ideas for our future work on NSDDEs satisfying generalized monotonicity conditions. One of them is to derive the strong convergence rate in $L^{p}(\Omega;\R^{d})$-norm with general $p \geq 2$ \cite{liu2022lpconvergence}, and the other is to develop numerical methods with higher order convergence rates \cite{wang2023meansquare}.

%
%
%
%
%
%
%
%
%
%
%
%
%
%
%
%
%

\bibliographystyle{abbrv}
\bibliography{NSDDERefs20240103}
\end{document}